\documentclass[12pt,reqno]{amsart}
\usepackage{fullpage}
\usepackage[top=1.5in, bottom=2in, left=2.5in, right=2.5in]{geometry}
\usepackage{graphicx,amsmath,amssymb,amsfonts,amsthm,enumitem,bm,xcolor}
\usepackage{fullpage}
\usepackage{cleveref}

\usepackage[normalem]{ulem}

\usepackage{url}

\usepackage{color}

\renewcommand{\d}{{\delta}}

\renewcommand{\o}{{\omega}}

\renewcommand{\(}{\left\(}
\renewcommand{\)}{\right\)}

\newcommand{\pa}[2]{\left(\frac{#1}{#2}\right)}

\numberwithin{equation}{section}
\theoremstyle{plain}
\newtheorem{theorem}{Theorem}[section]
\newtheorem{lemma}[theorem]{Lemma}

\newtheorem{conjecture}[theorem]{Conjecture}
\newtheorem*{conjecture*}{Conjecture}

\newtheorem{proposition}[theorem]{Proposition}

\theoremstyle{definition}

\newtheorem*{remark}{Remark}

\numberwithin{equation}{section}

\renewcommand{\pmod}[1]{\ \left( \mathrm{mod} \, #1 \right)}

\setlist[enumerate]{leftmargin=*,label=\rm{(\arabic*)}}

\makeatletter
\@namedef{subjclassname@2020}{%
	\textup{2020} Mathematics Subject Classification}
\makeatother

\allowdisplaybreaks

\title{On congruence conjectures of Andrews and Bachraoui}
\author{Koustav Banerjee}
\author{Kathrin Bringmann}
\address{University of Cologne, Department of Mathematics and Computer Science, Weyertal 86-90, 50931 Cologne, Germany}
\email{kbanerj1@uni-koeln.de}
\email{kbringma@uni-koeln.de}
\author{Mohamed El Bachraoui}
\address{Dept. Math. Sci, United Arab Emirates University, PO Box 15551, Al-Ain, UAE}
\email{melbachraoui@uaeu.ac.ae}

\makeatletter
\@namedef{subjclassname@2020}{%
	\textup{2020} Mathematics Subject Classification}
\makeatother

\subjclass[2020]{05A17, 11P81, 11P83}
\keywords{congruences, mock theta functions, $q$-series, theta functions}

\begin{document}

\begin{abstract}

Andrews and the third author recently studied congruences for certain restricted two-color partitions.  They made two conjectures for Ramanujan-type congruences and a vanishing identity for the limiting sequence. In this paper, we settle these conjectures by relating the corresponding generating function to modular forms and mock theta functions. 

\end{abstract}

\maketitle




\section{Introduction and Statements of results} 

 In \cite[(1.1)]{AB}, Andrews and the third author considered restricted two-color partitions (see \cite[Definition 1]{AB}) $C(k,n)$ (for $k\in \mathbb{N}$), which count the number of two-color partitions of $n$, where the smallest part is odd and occurs at least once in blue color, every even part in blue color is at least $2k-1$ greater than the smallest odd part, and even parts of same color are distinct. They showed that
\[
C_k(q):=\sum_{n\ge 0}C\left(k,n\right)q^n=\sum_{n\ge 0}\frac{\left(-q^{2n+2k},-q^{2n+2};q^2\right)_{\infty}q^{2n+1}}{\left(q^{2n+1};q^2\right)^2_{\infty}},
\]  
where $(a)_n=(a;q)_n:=\prod_{j=0}^{n-1}(1-aq^j)$ ($n\in \mathbb{N}_0\cup \{\infty\}$). Letting $k\to \infty$, in \cite[Section 10]{AB} they introduced
\[
C(q):=\lim_{k\to \infty}C_k(q)=\sum_{n\ge 0}\frac{\left(-q^{2n+2};q^2\right)_{\infty}q^{2n+1}}{\left(q^{2n+1};q^2\right)^2_{\infty}}=:\sum_{n\ge 0}c(n)q^n.
\]
In \cite[Remark 3]{AB}, the authors then represented $C(q)$ as 
\begin{equation}\label{C}
C(q)=qA(q)S(q),
\end{equation}
where\footnote{The definition of $A(q)$ slightly differs (by a prefactor $q$) from Andrews and Bachraoui's definition of $A(q)$, see \cite[Remark 3]{AB}.}  
\begin{align}\label{A}
A(q)&:=\!\frac{\left(-q^2;q^2\right)_{\infty}}{\left(q;q^2\right)^2_{\infty}}\!=:\!\sum_{n\ge 0}a(n)q^n,\\\label{S}
 S(q)&:=\!\sum_{n\ge 0}\frac{\left(q;q^2\right)^2_n q^{2n}}{\left(-q^2;q^2\right)_n}\!=:\!\sum_{n\ge 0}s(n)q^n.
\end{align}
In the context of $S(q)$, they \cite[Remark 2]{AB} remarked that
\begin{quote}
	{\it It is therefore natural to expect modular or mock-modular behaviour of $S(q)$.}	
\end{quote}
 According to Ramanujan \cite{BB}, a {\it mock theta function}\footnote{In modern language, a mock theta function is the holomorphic part of a so-called harmonic Maass form (a certain non-holomorphic modular object).} is a function $F$ that has asymptotic expansions at every roots of unity of the same type as those of weakly holomorphic modular forms, however there is not a single weakly holomorphic modular form whose asymptotic expansions agree at all roots of unity with that of $F$. 

In this paper, we confirm this by relating $S(q)$ to mock theta functions. Recall Ramanujan's third order mock theta function $\o(q)$ 
\begin{equation}\label{Ramanujanomega}
\omega(q):=\sum_{n\ge0}\frac{q^{2n(n+1)}}{\left(q;q^2\right)^2_{n+1}}=:\sum_{n\ge 0}c_{\o}(n)q^n
\end{equation}
and  McIntosh's \cite{McIntosh} second order mock theta function $B(q)$ 
\begin{equation}\label{McIntosh}
B(q):=\sum_{n\ge 0}\frac{\left(-q^2;q^2\right)_n q^{n(n+1)}}{\left(q;q^2\right)^2_{n+1}}=:\sum_{n\ge 0}c_B(n)q^n.
\end{equation}
\begin{theorem}\label{lem1}
	We have
	\[
	S(q)=2B(-q)-\frac{\left(q;q^2\right)^2_{\infty}}{\left(-q^2;q^2\right)_{\infty}}\o(-q).
	\]
\end{theorem}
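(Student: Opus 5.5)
We would prove the equivalent statement obtained by replacing $q\mapsto -q$, namely
\[
S(-q)=\sum_{n\ge 0}\frac{\left(-q;q^2\right)_n^2q^{2n}}{\left(-q^2;q^2\right)_n}=2B(q)-\frac{\left(-q;q^2\right)_\infty^2}{\left(-q^2;q^2\right)_\infty}\omega(q).
\]
The left side is a nonterminating basic hypergeometric series in base $q^2$. Written as a ${}_3\phi_2$, it has numerator parameters $-q,-q,q^2$, denominator parameters $-q^2,0$, and argument $q^2$. The infinite-product prefactor of $\omega$ on the right is $(a,b;q^2)_\infty/(c;q^2)_\infty$ with $a=b=-q$ and $c=-q^2$. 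This strongly suggests a three-term transformation, or equivalently a ``sum of tails'' identity of the shape
\[
\sum_{n\ge0}t^n\frac{(a)_n(b)_n}{(c)_n}=\frac{(a)_\infty(b)_\infty}{(c)_\infty}\,X(a,b,c,t)+Y(a,b,c,t).
\]
Here $X$ and $Y$ are again single-sum $q$-series. The plan is to derive such an identity in base $q^2$ and then specialise the parameters.

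For the derivation, we would start from the general series $F(t):=\sum_{n\ge0}\frac{(a;q^2)_n(b;q^2)_n}{(c;q^2)_n}t^n$. We would write $\frac{(a;q^2)_n(b;q^2)_n}{(c;q^2)_n}=\frac{(a,b;q^2)_\infty}{(c;q^2)_\infty}\cdot\frac{(cq^{2n};q^2)_\infty}{(aq^{2n},bq^{2n};q^2)_\infty}$. We would then expand the last quotient with the $q$-binomial theorem, or alternatively apply Heine's transformation to the auxiliary ${}_2\phi_1$ obtained after the standard trick of writing $1/(1-t)$ contributions. Interchanging summations should produce the infinite-product term times a ${}_2\phi_1$-type sum, plus a correction sum.

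Concretely, we would aim at the Fine/Andrews–Berndt (\emph{Lost Notebook}) three-term relation for $\sum (a)_n(b)_n t^n/(c)_n$ and use it in base $q^2$ with $a=b=-q$, $c=-q^2$, $t=q^2$. We would then match the two resulting series with the Eulerian forms
\[
\omega(q)=\sum_{n\ge0}\frac{q^n}{\left(q;q^2\right)_{n+1}}\qquad\text{and}\qquad B(q)=\sum_{n\ge0}\frac{q^n\left(-q;q\right)_{2n}}{\left(q;q^2\right)_{n+1}}.
\]
We would first verify that these agree with \eqref{Ramanujanomega} and \eqref{McIntosh}. For $\omega$ this is a standard Fine/Heine manipulation. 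For $B$ we would either quote McIntosh or prove it by one more Heine transformation.

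The main obstacle is choosing the right transformation so that the pieces land exactly on these two known mock theta series. In particular, the factor $2$ in front of $B(q)$ must appear naturally. We expect it to come from a term of the form $(-q;q)_{2n}/(-q^2;q^2)_n=(-q;q^2)_n$, combined with a symmetrisation of the two equal parameters $a=b=-q$.

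If the direct three-term formula is awkward, we would try a fallback. We would define $G(z):=\sum_{n\ge0}\frac{(-zq;q^2)_n(-q/z;q^2)_n q^{2n}}{(-q^2;q^2)_n}$, which is symmetric in $z\leftrightarrow z^{-1}$. We would derive a $q$-difference equation or a partial-fraction expansion in $z$, and then set $z=1$.

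As a final safeguard, we would check the identity numerically on the first several coefficients, comparing $s(n)$ against the coefficients from $2c_B$ and $c_\omega$. This confirms signs and normalisations before writing up the formal manipulation.
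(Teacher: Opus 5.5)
Your instinct matches the paper, but as written the proposal has a gap. The paper's proof is just a specialization of Ramanujan's three-term identity \Cref{lemident1}, which has exactly the shape $\sum t^n(a)_n(b)_n/(c)_n=\frac{(a)_\infty(b)_\infty}{(c)_\infty}X+Y$ that you anticipate. The proposal, however, never states this identity.

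The parts you do make concrete would not work. Your derivation sketch expands $(cq^{2n};q^2)_\infty/(aq^{2n},bq^{2n};q^2)_\infty$ by the $q$-binomial theorem and sums over $n$. This only writes the whole left-hand side as the product $(a,b;q^2)_\infty/(c;q^2)_\infty$ times a double sum with factors $1/(1-q^{2m+2k+2})$. Nothing in it explains how a product-free term like $2B(q)$ splits off, or where the quadratic exponents $q^{n(n+1)}$ and $q^{2n(n+1)}$ come from.

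The matching step also rests on a false formula. The series $\sum_{n\ge0}q^n(-q;q)_{2n}/(q;q^2)_{n+1}$ is not $B(q)$: its coefficient of $q^3$ is $7$, while $c_B(3)=6$. McIntosh's Eulerian form is $\sum_{n\ge0}q^n(-q;q^2)_n/(q;q^2)_{n+1}$. Likewise, the factor $2$ does not come from symmetrizing $a=b$.

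The missing step is to apply \Cref{lemident1} with $q\mapsto q^2$, $a=b=q^{-1}$ and $c=1$. In Ramanujan's normalization $(-aq^2,-bq^2;q^2)_n/(-cq^2;q^2)_n$, this is exactly your choice $a=b=-q$, $c=-q^2$, $t=q^2$; the paper instead takes $a=b=-q^{-1}$ to get $S(q)$ directly. With these values, $c/(ab)=q^2$, $(ab/c)^n=(ab/c^2)^n=q^{-2n}$, and the denominators become $(q;q^2)_n^2$. The factor $2$ comes from $(-c^{-1};q^2)_n=(-1;q^2)_n=2(-q^2;q^2)_{n-1}$. This gives
\[
\sum_{n\ge0}\frac{(-q;q^2)_n^2q^{2n+2}}{(-q^2;q^2)_n}=2q^2\sum_{n\ge1}\frac{(-q^2;q^2)_{n-1}q^{n(n-1)}}{(q;q^2)_n^2}-q^2\frac{(-q;q^2)_\infty^2}{(-q^2;q^2)_\infty}\sum_{n\ge1}\frac{q^{2n(n-1)}}{(q;q^2)_n^2}.
\]
After $n\mapsto n+1$, the two sums are literally \eqref{McIntosh} and \eqref{Ramanujanomega}. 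So no Eulerian forms, and no Heine or Fine conversions, are needed. Dividing by $q^2$ gives your $S(-q)$ version of the statement.
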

 Similar to the Ramanujan’s congruences for the partition function
\begin{align*}
p(5n+4)&\equiv 0\pmod{5},\\ 
p(7n+5)&\equiv 0\pmod{7},\\ 
p(11n+6)&\equiv 0\pmod{11}.
\end{align*}
 Andrews and the third author conjectured the following \cite[Conjectures 1 and 2]{AB}.
\begin{conjecture}\label{conj1a}
For $n\in \mathbb{N}_0$, we have
\begin{align*}
c(8n+4)\equiv 0\pmod{4}.
\end{align*}
\end{conjecture}
\begin{conjecture}\label{conj1b}
	For $n\in \mathbb{N}_0$, we have
	\begin{align*}
	c(8n+6)\equiv 0\pmod{8}.
	\end{align*}
\end{conjecture}

In this paper, we prove these conjectures.

\begin{theorem}\label{ABConj1}
\Cref{conj1a} is true.
\end{theorem}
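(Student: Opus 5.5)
By \Cref{lem1} and \eqref{C}, the first step is to remove the mock part from $S(q)$. By \eqref{A}, $A(q)\cdot\frac{(q;q^2)_\infty^2}{(-q^2;q^2)_\infty}=1$, so
\[
C(q)=qA(q)S(q)=2qA(q)B(-q)-q\,\o(-q).
\]
Hence $c(8n+4)$ is the coefficient of $q^{8n+3}$ in $2A(q)B(-q)-\o(-q)$, which is
\[
c(8n+4)=2\,[q^{8n+3}]\bigl(A(q)B(-q)\bigr)+c_\o(8n+3).
\]
So it suffices to prove two things: $[q^{8n+3}]\bigl(A(q)B(-q)\bigr)\equiv 0\pmod 2$, and $c_\o(8n+3)\equiv 0\pmod 4$.

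The first statement should be elementary. Modulo $2$ we have $(q;q^2)_\infty^2\equiv(q^2;q^4)_\infty$ and $(-q^2;q^2)_\infty\equiv(q^2;q^2)_\infty$. Therefore $A(q)\equiv (q^2;q^2)_\infty/(q^2;q^4)_\infty=(q^4;q^4)_\infty$ is a series in $q^4$. Likewise $B(-q)\equiv B(q)\pmod 2$. So I need the coefficients of $q^{8n+3}$ in $(q^4;q^4)_\infty B(q)$ modulo $2$. I will take the odd part of $B$ from its $2$-dissection; McIntosh gives $B$ as an Appell--Lerch sum. Reducing the resulting theta quotients mod $2$ via $(q;q)_\infty^2\equiv(q^2;q^2)_\infty$, I expect the odd part of $B(q)(q^4;q^4)_\infty$ to become $q$ times a series in $q^4$ modulo $2$. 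An alternative is a lacunary series supported on exponents $\not\equiv 3\pmod 8$. Either outcome kills the $q^{8n+3}$ terms.

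The second statement, $c_\o(8n+3)\equiv 0\pmod 4$, is the main obstacle. I would start from the known $2$-dissection of the third-order function $\o$, namely the Watson/Andrews identities relating $\o(q)$ and $\o(-q)$ to $f(q^8)$ and to theta quotients such as $(q^4;q^4)^3_\infty(q^8;q^8)^2_\infty/\ldots$. These express the odd part $\frac{\o(q)-\o(-q)}{2}$ essentially as $q$ times an eta-quotient in $q^2$, possibly plus a multiple of a mock theta function in $q^4$ or $q^8$. I would then dissect once more, into residues mod $4$ and then mod $8$, isolating the $q^{8n+3}$ part. The congruence mod $4$ should follow from reductions such as $(q;q)^4_\infty\equiv(q^2;q^2)^2_\infty\pmod 4$ and $(q;q)^2_\infty\equiv(q^2;q^2)_\infty\pmod 2$ applied to the theta quotients, together with a factor $2$ or $4$ that appears in front of any remaining mock piece.

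If the pure $\o$-dissection becomes messy, I would instead treat both terms together as a single Appell--Lerch expression, since $S(q)$ combines $B$ and $\o$. I would then use the standard $m(x,q,z)$ dissection formulas of Hickerson--Mortenson. Finally I would verify the resulting theta-function identity numerically for many coefficients before writing out the congruence argument.
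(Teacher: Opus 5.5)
Your reduction is the same as the paper's. \Cref{lem1} gives $C(q)=2qA(q)B(-q)-q\o(-q)$, hence $c(8n+4)=2\,[q^{8n+3}]\left(A(q)B(-q)\right)+c_\o(8n+3)$, and your step $A(q)\equiv(q^4;q^4)_\infty\pmod 2$ is correct. But you do not actually prove either of the two claims you reduce to. For the one you call elementary, the plan lacks the decisive idea.

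In the Lerch-type representation of \Cref{And}, the prefactor $(-q^2;q^2)_\infty/(q^2;q^2)_\infty$ is $\equiv 1\pmod 2$. What remains is the Appell--Lerch sum $\sum_{n\in\mathbb{Z}}(-1)^nq^{2n(n+1)}/(1-q^{2n+1})$, which is not a theta quotient, so ``reducing the resulting theta quotients mod $2$'' does not touch it. What is needed is to pair the terms $n$ and $-n-1$. They add up to $(-1)^nq^{2n(n+1)}\frac{1+q^{2n+1}}{1-q^{2n+1}}\equiv q^{2n(n+1)}\pmod 2$, so $B(-q)\equiv B(q)\equiv\sum_{n\ge0}q^{2n(n+1)}=\psi(q^4)\pmod 2$. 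Thus modulo $2$ the odd part of $B$ vanishes outright, $A(q)B(-q)$ is a series in $q^4$, and its coefficients at $q^{8n+3}$ are even. Without this argument, or some explicit $2$-dissection of $B$ showing that all $c_B(2m+1)$ are even, your first claim remains an expectation rather than a proof.

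Conversely, the step you call the main obstacle, $c_\o(8n+3)\equiv0\pmod 4$, is a published theorem of Andrews, Passary, Sellers, and Yee, quoted in the paper as \Cref{Ref1}; the paper simply cites it. If you want to prove it along your lines, it is short. Compare coefficients of $q^{8n+4}$ in Watson's identity (\Cref{odissection}): neither $f(q^8)$ nor $2q^3\o(-q^4)$ contributes. So by \Cref{Fdissection}, $2c_\o(8n+3)=[q^{2n+1}]\,G_0(q)$, where $G_0(q)=\Theta^3(q)/(q)_\infty^2=(q^2;q^2)_\infty^{15}/\left((q)_\infty^8(q^4;q^4)_\infty^6\right)$ by \eqref{theta}. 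Since $(q)_\infty^8\equiv(q^2;q^2)_\infty^4\pmod 8$, $G_0$ is congruent modulo $8$ to a series in $q^2$, and the claim follows. The factor $2$ in $2q\o(q)$ forces you to work modulo $8$ here. In this formulation, the mod-$4$ reduction $(q)_\infty^4\equiv(q^2;q^2)_\infty^2$ you mention would only give $c_\o(8n+3)\equiv0\pmod 2$.
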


\begin{theorem}\label{ABConj1a}
	\Cref{conj1b} is true.
\end{theorem}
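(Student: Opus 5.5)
Theorem~\ref{ABConj1a} asks us to prove $c(8n+6)\equiv 0 \pmod 8$. The plan is to use \eqref{C} together with Theorem~\ref{lem1}. The key observation is that the prefactor in Theorem~\ref{lem1} cancels against $A(q)$:
\[
A(q)\cdot\frac{(q;q^2)_\infty^2}{(-q^2;q^2)_\infty}=1 .
\]
Hence
\[
C(q)=2qA(q)B(-q)-q\,\omega(-q),
\]
so that
\[
c(n)=2\sum_{j+k=n-1}a(j)(-1)^k c_B(k)\;-\;(-1)^{n-1}c_\omega(n-1).
\]
It therefore suffices to understand $c_\omega(8n+5)$ modulo $8$ and $\sum_{j+k=8n+5}(-1)^k a(j)c_B(k)$ modulo $4$.

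\textbf{Step 1: the $\omega$ term.} For $c_\omega(8n+5)\pmod 8$, I would use the known representation of $\omega(q)$ in terms of theta functions and Appell--Lerch sums. One option is the Andrews-type identity $\omega(q)=\sum_{n\ge0}q^n(-q;q)_{2n}/(q;q^2)_{n+1}$. Another is the $2$-dissection results of Watson and Fine, for instance
\[
\omega(q)+\omega(-q)=\frac{2(q^4;q^4)_\infty^3}{(q;q)_\infty^2(q^2;q^2)_\infty}\ \text{(up to shape)}.
\]
Known congruences for $p_\omega(n)=c_\omega(n)$, the smallest parts function of Andrews--Dixit--Yee, include results such as $p_\omega(8n+5)\equiv p_\omega(8n+7)\equiv 0\pmod 4$ and related statements modulo $8$ in the literature by Waldherr and Wang. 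Concretely, I would $2$-dissect $\omega$ twice (mod $4$, then mod $8$ in the exponent) via these theta identities, then reduce products like $(q;q)_\infty^{8}\equiv(q^2;q^2)_\infty^4\pmod 8$ to show the $q^{8n+5}$ coefficients vanish modulo $8$.

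\textbf{Step 2: the $B$ term.} Modulo $4$ we have $A(q)\equiv(-q^2;q^2)_\infty/(q^2;q^4)_\infty$-type expressions, since $(q;q^2)_\infty^2\equiv(q^2;q^4)_\infty\pmod 2$ lifts suitably. Rather than that reduction, I would use the Appell--Lerch form of $B(q)$. McIntosh gives
\[
B(q)=\frac{(-q^2;q^2)_\infty}{(q;q^2)_\infty^2}\sum_n\frac{(-1)^n q^{2n^2+2n}}{1-q^{2n+1}}\ \text{(up to normalization)}.
\]
With this, $A(q)B(-q)$ becomes $A(q)A(-q)$ times an Appell--Lerch sum, and $A(q)A(-q)$ is an even function with a nice eta-quotient. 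The Appell--Lerch sum $\sum (-1)^n q^{2n^2+2n}/(1+q^{2n+1})$ can then be dissected by splitting $n$ by parity and expanding geometric series. Finally, I would show the coefficient of $q^{8n+5}$ is $\equiv0\pmod 4$, using $(q;q)_\infty^4\equiv(q^2;q^2)_\infty^2\pmod 4$ to simplify the eta-quotient into something supported on exponents of a fixed residue.

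The main obstacle is expected to be Step 1 modulo $8$, i.e.\ a genuinely mock object needing a mod-$8$ congruence in a single residue class. If direct dissection fails, the fallback is to write $q\,\omega(q)$ as a combination of the second-order function and theta quotients. Alternatively, one can use the harmonic Maass form completion: show the relevant sieved function plus its theta part is congruent modulo $8$ to a weakly holomorphic modular form, and verify this via the Sturm bound.
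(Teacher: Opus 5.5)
Your reduction via \eqref{eqn1} to the two claims $c_\omega(8n+5)\equiv 0\pmod 8$ and $\operatorname{coeff}_{[q^{8n+5}]}(A(q)B(-q))\equiv 0\pmod 4$ is exactly the paper's. However, you have placed the difficulty in the wrong step.

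Step~1 is not an obstacle: it is the known congruence of Andrews--Passary--Sellers--Yee recorded in \Cref{Ref1}, which the paper simply cites. Your recollection of that literature is off. Since $\sum_{n}p_\omega(n)q^n=q\omega(q)$, one has $p_\omega(n)=c_\omega(n-1)$, so the needed statement reads $p_\omega(8n+6)\equiv 0\pmod 8$. The congruence $p_\omega(8n+7)\equiv 0\pmod 4$ that you quote fails under either indexing ($c_\omega(6)=10$, $c_\omega(7)=14$). Your displayed formula for $\omega(q)+\omega(-q)$ also cannot hold, since its left side is even in $q$ and its right side is not. If you want a proof rather than a citation, compare exponents $\equiv 2\pmod 4$ in \Cref{odissection} and \Cref{Fdissection}. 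This gives $\sum_{m\ge0}c_\omega(4m+1)q^m=\frac{2\Theta(q)\psi^2(q^2)}{(q)^2_\infty}=\frac{2\Theta^2(q)\psi^2(q^2)}{(q^2;q^2)_\infty\Theta^2(-q^2)}$, and the odd part of $\Theta^2(q)$ is $4q\psi^2(q^4)$.

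The genuine gap is in Step~2, which is where the content of the theorem lies.

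First, the Lerch-type formula you use is wrong, not merely ``up to normalization''. By \Cref{And} the prefactor is $(-q^2;q^2)_\infty/(q^2;q^2)_\infty$, not $A(q)$; your version gives $c_B(1)=4$ instead of $2$. Thus $A(q)B(-q)=\frac{(-q^2;q^2)^2_\infty}{\Theta(-q)}\sum_{n\in\mathbb{Z}}\frac{(-1)^nq^{2n(n+1)}}{1+q^{2n+1}}$, whose prefactor is not even, and there is no product $A(q)A(-q)$ to exploit. Nor is $A$ even modulo $4$: from $A(q)=(q^4;q^4)_\infty/\Theta(-q)$ and $1/\Theta(-q)\equiv\Theta(q)\pmod 4$ one gets $A(q)\equiv(q^4;q^4)_\infty\Theta(q^4)+2q(q^4;q^4)_\infty\psi(q^8)\pmod 4$.

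Second, and decisively, the vanishing modulo $4$ is only asserted, and no reduction of eta quotients can supply it. Set $B_j(q):=\sum_{n}c_B(4n+j)q^n$. The last congruence shows that, modulo $4$, the $q^{4n+1}$-part of $A(q)B(-q)$ equals $-q(q^4;q^4)_\infty\Theta(q^4)B_1(q^4)+2q(q^4;q^4)_\infty\psi(q^8)B_0(q^4)$.
\begin{itemize}
\item The second term only needs $B_0$ modulo $2$. The elementary pairing $n\leftrightarrow -n-1$ in the Lerch sum does provide this, namely $B(q)\equiv\psi(q^4)\pmod 2$.
\item The first term needs $B_1$ modulo $4$, a genuinely mod-$4$ statement about the mock function. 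Splitting $n$ by parity and expanding geometric series gives you no handle on it.
\end{itemize}

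The paper obtains exactly this input from the Chan--Mao identity $B_1(q)=2(q^2;q^2)^8_\infty/(q)^7_\infty$ (\Cref{CM}) and Mao's eta quotient for $B_0$ (\Cref{Mao}). With these, both cross terms equal $2(q^2;q^2)^{13}_\infty/((q)^8_\infty(q^4;q^4)^2_\infty)$ up to sign and combine to $0$ modulo $4$. This even gives the vanishing at all exponents $4n+1$. Without these identities, or an equivalent explicit mod-$4$ evaluation of the odd part of the Lerch sum, your Step~2 does not go through.
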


 Moreover, we find and prove the following congruence for $c(n)$. 

\begin{theorem}\label{newthm}
For $n\in \mathbb{N}_0$, we have
\[
	c(16n+13)\equiv 0 \pmod 4.
\]
\end{theorem}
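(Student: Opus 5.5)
Combining \eqref{C} with \Cref{lem1}, the $S(q)$ factor becomes explicit. Since $A(q)=\frac{(-q^2;q^2)_\infty}{(q;q^2)_\infty^2}$, the product in front of $\o(-q)$ cancels the modular prefactor up to the sign change $q\mapsto -q$. We get
\[
C(q)=2qA(q)B(-q)-q\,\frac{(-q^2;q^2)_\infty (q;q^2)_\infty^2}{(q;q^2)_\infty^2(-q^2;q^2)_\infty}\,\o(-q)\cdot\frac{1}{1}
=2qA(q)B(-q)-q\,\o(-q)\cdot\frac{(q;q^2)^2_\infty(-q^2;q^2)_\infty}{(-q^2;q^2)_\infty (q;q^2)_\infty^2}.
\]
Carrying out the cancellation carefully, this is $C(q)=2qA(q)B(-q)-q\,\o(-q)$. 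Hence $c(n)=2\sum_{j}a(j)(-1)^{n-1-j}c_B(n-1-j)-(-1)^{n-1}c_\o(n-1)$. For $n=16m+13$ we must show
\[
c_\o(16m+12)\equiv 2\sum_{j}(-1)^{j}a(j)c_B(16m+12-j)\pmod 4 .
\]
So it suffices to understand $c_\o(16m+12)\pmod 4$ and $qA(q)B(-q)\pmod 2$ along the progression $16m+13$.

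For the second term we work modulo $2$. We have $A(q)\equiv \frac{(q^2;q^2)_\infty}{(q^2;q^4)_\infty^{2}}\equiv \frac{(q^2;q^2)_\infty}{(q^4;q^4)_\infty}$-type expressions. More usefully, $A(q)\equiv (-q^2;q^2)_\infty/(q^2;q^4)_\infty \pmod 2$ is a function of $q^2$, in fact a power of $(q^4;q^4)$-quotients. For $B$ I would use a known closed form of McIntosh's $B(q)$ modulo $2$: by the Appell–Lerch representation, or by reducing the defining series modulo $2$, $(q;q^2)_{n+1}^2\equiv (q^2;q^4)_{n+1}$, so $B(q)\equiv\sum_n (q^2;q^2)_n q^{n(n+1)}/(q^2;q^4)_{n+1}$. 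I would then identify this with a theta quotient mod $2$, e.g.\ via the standard identity $B(q)=\sum_{n\ge0} q^{n}(-q;q^2)_n/(q;q^2)_{n+1}$ and its Lambert-series form. The Lambert series makes $2B$ modulo $4$ a sum over divisor-type counts, equivalently a theta product mod 2. The aim is to show that $2qA(q)B(-q)$ restricted to exponents $\equiv13\pmod{16}$ is $\equiv 2q^{13}F(q^{16})\pmod 4$ with $F$ explicit, e.g.\ $F\equiv$ a square of a theta series.

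For $\o$ I would use the generating function of $\o$ in terms of an Appell–Lerch sum,
\[
\o(q)=\frac{1}{(q^2;q^2)_\infty}\sum_{n\in\mathbb Z}\frac{(-1)^n q^{3n^2+3n}(1+q^{2n+1})}{1-q^{2n+1}}.
\]
I would expand the denominators geometrically and reduce modulo $4$ using $(q^2;q^2)_\infty^{4}\equiv(q^4;q^4)_\infty^2\pmod 4$. Alternatively, I would use the known $2$-dissection of $\o(q)$ and $\o(-q)$ (Hirschhorn and Sellers style results on $p_\o(n)$, e.g.\ $\o(q)$'s odd part $=\frac{(q^4;q^4)^6}{(q^2;q^2)^4(q^8;q^8)^2}$-type product). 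I would then iterate dissections $q\to q^2\to q^4$ to extract the $16m+12$ subsequence mod $4$. The same iterated $2$-dissection would be applied to $A(q)B(-q)$ mod $2$.

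The main obstacle is matching the two sides mod $4$. Both extracted series must be brought to a common eta-quotient form and compared. If they do not reduce to identical products, I would fall back on verifying the congruence through Sturm's bound. That route multiplies by a suitable eta-quotient to obtain holomorphic modular forms, treating the mock part via its completion, whose shadow is a unary theta function supported on squares. Those terms contribute nothing on $16m+12$, since the relevant square classes mod $16$ avoid that residue. One then checks finitely many coefficients.
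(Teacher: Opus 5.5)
Your reduction $C(q)=2qA(q)B(-q)-q\,\omega(-q)$ and the resulting target congruence are correct. Beyond that, however, the proposal only lists possible strategies: neither side of the target congruence is ever determined, so the theorem is not proved. The framing of ``matching the two sides'' is also off, because both sides are separately $\equiv 0\pmod{4}$.

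For the $B$-side the missing idea is an exact evaluation modulo $2$. Your first chain of congruences for $A(q)$ is wrong: for instance $\frac{(q^2;q^2)_\infty}{(q^4;q^4)_\infty}=(q^2;q^4)_\infty$ has an odd $q^2$-coefficient, while $a(2)=4$. Your second expression does simplify correctly, to $A(q)\equiv\frac{(q^2;q^2)_\infty}{(q^2;q^4)_\infty}=(q^4;q^4)_\infty\pmod{2}$. For $B$, do not go through the defining series or a Lambert series; use Andrews' Lerch-type representation (\Cref{And}). Modulo $2$ the prefactor and the signs $(-1)^n$ become $1$, so $B(-q)\equiv\sum_{n\in\mathbb{Z}}\frac{q^{2n(n+1)}}{1+q^{2n+1}}$. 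The terms for $n$ and $-n-1$ add up to exactly $q^{2n(n+1)}$, hence $B(-q)\equiv\psi(q^4)$. Therefore $A(q)B(-q)\equiv(q^4;q^4)_\infty\psi(q^4)=(q^8;q^8)_\infty^2\pmod{2}$, which has no $q^{16m+12}$-term at all. So $2qA(q)B(-q)$ contributes $0$ modulo $4$ to $c(16m+13)$, and the $F$ you hoped for is simply $0$.

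For the $\omega$-side you need exactly $c_\omega(16m+12)\equiv 0\pmod{4}$. This is a known theorem of Andrews, Passary, Sellers, and Yee (\Cref{Ref1}), which the paper just quotes; together with the previous paragraph the proof is three lines. None of your routes to it is carried out. The one explicit formula you propose is off by a factor $2$, which is fatal in a computation modulo $4$: the summand $\frac{(-1)^nq^{3n^2+3n}(1+q^{2n+1})}{1-q^{2n+1}}$ is invariant under $n\mapsto-n-1$, so its sum over $\mathbb{Z}$ equals $2(q^2;q^2)_\infty\omega(q)$. Watson's identity sums over $n\ge 0$ only, or equivalently over $\mathbb{Z}$ without the factor $1+q^{2n+1}$. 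Your Sturm-bound fallback rests on a correct observation, namely that the shadow of $\omega$ does not meet the class $16m+12$. But you leave undone the construction of the holomorphic forms, the treatment of the second mock theta function $B$, and the finite check.
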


In context of $S(q)$, Andrews and the third author proposed \cite[Conjecture 3]{AB} the following conjecture: 
\begin{conjecture}\label{Conj2}
For $n\in \mathbb{N}_0$, we have 
\[s(4n+1)=0.\]
\end{conjecture}

In this paper, we confirm this conjecture.

\begin{theorem}\label{ABConj2}
	\Cref{Conj2} is true.
\end{theorem}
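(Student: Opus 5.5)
The plan is to deduce the vanishing of $s(4n+1)$ from \Cref{lem1} by extracting the residue class $1 \pmod 4$ with roots of unity. Write $\zeta=i$. Then
\[
\sum_{n\ge0}s(4n+1)q^{4n+1}=\frac14\sum_{j=0}^{3}\zeta^{-j}S\left(\zeta^{j}q\right),
\]
so it suffices to show that this combination vanishes identically. Substituting \Cref{lem1} splits the problem into a $B$-part and an $\omega$-part. The $B$-part is the $1\pmod 4$ component of $2B(-q)$. The $\omega$-part is the $1\pmod 4$ component of the product $P(q)\,\omega(-q)$, where
\[
P(q):=\frac{(q;q^2)^2_\infty}{(-q^2;q^2)_\infty}
\]
is a theta quotient.

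First I would rewrite both mock theta functions as Appell--Lerch sums, using the standard representations in the Hickerson--Mortenson notation $m(x,q,z)$. For $\omega(q)$ one has
\[
\omega(q)=-q^{-1}m\left(q,q^6,q^2\right)-q^{-1}m\left(q,q^6,q^4\right).
\]
McIntosh's $B(q)$ likewise has an expression as an $m(x,q^4,z)$ sum, or equivalently as a Hecke-type double sum divided by a theta function. Second, I would $2$-dissect everything using the classical dissections. For $\omega$ these include Ramanujan's identity relating $f(q^8)$, $\omega(q)$, $\omega(-q^4)$ and a theta product. For the theta quotient $P(q)$ one uses dissections of $(q;q^2)^2_\infty=\varphi(-q)/(q^2;q^2)_\infty\cdot(\text{theta})$, where $\varphi(-q)=\varphi(q^4)-2q\psi(q^8)$-type identities apply. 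A second $2$-dissection then isolates the class $1 \pmod 4$.

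After this dissection, the statement becomes an identity saying that the $1\pmod 4$ part of $2B(-q)$ equals the $1\pmod 4$ part of $P(q)\omega(-q)$. Each side is (a power of $q$ times) a theta quotient multiplied by Appell--Lerch sums in $q^4$.

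The main obstacle is proving this final identity of mock objects. I would try it in two ways. The first route is direct: apply the functional equations of $m(x,q,z)$, namely the change of $z$ (which costs a theta quotient) and $m(x,q,z)=m(x^{-1}q,q,z^{-1})$, together with the dissection formula for $m(x,q^n,z)$. The aim is to make the mock parts cancel exactly and leave a theta-function identity. That residual identity can then be checked with the quintuple or triple product, or by comparing enough coefficients of a holomorphic modular form up to the Sturm bound.

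If the direct manipulation is unwieldy, the fallback is a modularity argument. Complete both sides to harmonic Maass forms (of weight $1/2$ on some $\Gamma_0(N)$ after multiplying by an eta quotient) and check that their shadows, which are unary theta functions, coincide. The difference is then a weakly holomorphic modular form. I would show it is holomorphic at all cusps using the asymptotic behaviour of $S$ and the theta quotients, and finally verify that it vanishes by checking coefficients up to the Sturm bound.
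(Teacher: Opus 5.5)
Your reduction is sound. Extracting the class $1\pmod{4}$ from \Cref{lem1} does reduce the statement to showing that the $1\pmod{4}$ components of $2B(-q)$ and of $P(q)\omega(-q)$ coincide. You also name the right raw ingredients: the $2$-dissection of $\Theta(-q)$ and Watson's identity \Cref{odissection}. But the proposal stops exactly where the proof has to happen. The step you call the main obstacle gets two candidate strategies, and neither is carried out or specified enough to be checked. There is no explicit Appell--Lerch form of $B$, and no level, weight, completion or shadow computation. The holomorphy at all cusps ``using the asymptotic behaviour of $S$'' is itself unproved. The fallback could in principle be made rigorous, but as written this is a plan, not a proof.

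More importantly, the obstacle is misdiagnosed. After the dissection nothing mock survives, so your claim that each side is a theta quotient times Appell--Lerch sums in $q^4$ is false. Three observations finish the argument.

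\begin{enumerate}
\item By Chan--Mao (\Cref{CM}), $\sum_{n\ge0}c_B(4n+1)q^n=2(q^2;q^2)^8_\infty/(q)^7_\infty$. So applying $\mathcal{R}$ (the map $\sum h(n)q^n\mapsto\sum h(4n+1)q^n$) to $2B(-q)$ gives $-4(q^2;q^2)_\infty/(q;q^2)^7_\infty$.
\item We have $P(q)=\Theta(-q)/(q^4;q^4)_\infty$, and $\Theta(-q)=\Theta(q^4)-2q\psi(q^8)$ is supported on exponents $\equiv 0,1\pmod{4}$. So, writing $\omega(q)=\sum_{j=0}^3q^j\omega_j(q^4)$, only $\omega_0$ and $\omega_1$ enter.
\item In Watson's identity the terms $f(q^8)$ and $2q^3\omega(-q^4)$ only feed $\omega_3$ and $\omega_2$. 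Hence, by \Cref{Fdissection}, $\omega_0=G_1/2$ and $\omega_1=G_2/2$ are theta quotients.
\end{enumerate}

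Together these give $\mathcal{R}(\Theta(q)\omega(q))=4\Theta^2(q)\psi^2(q^2)/(q)^2_\infty=4(q^2;q^2)^2_\infty/(q;q^2)^6_\infty$. Now apply $q\mapsto -q$, which multiplies this class by $-1$, and divide by $(q)_\infty$, which comes from $(q^4;q^4)_\infty$. The $\omega$-part is then also $-4(q^2;q^2)_\infty/(q;q^2)^7_\infty$, and the two pieces cancel identically.

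No Appell--Lerch functional equations, completions, or Sturm bounds are needed. The entire content is that these particular residue classes of $B$ and $\omega$ are already modular.
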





The remainder of the paper is organized as follows. In \Cref{sec1} we recall some well-known $q$-series transformations and identities related to modular forms and mock theta functions. In \Cref{sec2} we relate $S(q)$ to mock theta functions. In \Cref{sec3}, \Cref{sec3a}, and \Cref{sec3b}, we prove \Cref{ABConj1}, \Cref{ABConj1a}, and \Cref{newthm}, respectively. In \Cref{sec4} we prove \Cref{ABConj2}. Finally in \Cref{sec5} we give few follow up questions for future research.


\section{Preliminaries}\label{sec1}

In this section, we recall $q$-series identities and congruences for theta functions and mock theta functions.

We start with the following identity due to Ramanujan \cite[equation (3.17)$_\text{R}$]{A0}.


\begin{lemma}\label{lemident1} 
	We have 
	\begin{align*}
	&\sum_{n\ge 0}\!\frac{\left(-aq,-bq\right)_nq^{n+1}}{\left(-cq\right)_n}\!
	=\!\frac{c}{ab}\!\sum_{n\ge 1}\!\frac{\left(-c^{-1}\right)_n \pa{ab}{c}^n\! q^{\frac{n(n+1)}{2}}}{\left(\frac{aq}{c},\frac{bq}{c}\right)_n}\\
	&\hspace{7 cm}-\!\frac{c\left(-aq,-bq\right)_{\infty}}{ab\left(-cq\right)_{\infty}}\!\sum_{n\ge 1}\!\frac{\pa{ab}{c^2}^nq^{n^2}}{\left(\frac{aq}{c},\frac{bq}{c}\right)_n}.
	\end{align*}
\end{lemma}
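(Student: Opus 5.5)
We plan to derive the identity from a one-parameter deformation. Set
\[
G(z):=\sum_{n\ge 0}\frac{\left(-aq,-bq\right)_n}{\left(-cq\right)_n}z^n ,
\]
so that the left-hand side of the lemma is $qG(q)$. Viewed as a function of $z$, $G$ is a basic hypergeometric ${}_2\phi_1\bigl(-aq,-bq;-cq;q,z\bigr)$. It has a simple pole at $z=1$ with residue governed by
\[
\lim_{z\to1}(1-z)G(z)=\frac{\left(-aq,-bq\right)_\infty}{\left(-cq\right)_\infty},
\]
which is exactly the infinite product on the right. This suggests the two sums on the right play different roles: the first is a ``regular part'' and the second accounts for the tail behaviour $t_n-t_\infty$, where $t_n:=\frac{(-aq,-bq)_n}{(-cq)_n}$.

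Concretely, we will write $t_n=t_\infty+(t_n-t_\infty)$ and treat the two contributions separately. For the difference $t_n-t_\infty$, we expand $\frac{(-cq^{n+1})_\infty}{(-aq^{n+1},-bq^{n+1})_\infty}$ using the $q$-binomial theorem, in the form of Heine's expansion $\frac{(-cq^{n+1})_\infty}{(-aq^{n+1})_\infty}=\sum_m \frac{(c/a)_m}{(q)_m}(-aq^{n+1})^m$, iterated once more for the $b$-factor. We then interchange the order of summation and sum the geometric series in $n$ (against the weight $q^{n+1}$) in closed form. The result should be a double sum that, after a Heine transformation, collapses to single sums with denominators $\left(\frac{aq}{c},\frac{bq}{c}\right)_n$; these arise from reorganising $\frac{1}{(-aq^{n+1})_\infty}$ against $(-cq^{n+1})_\infty$. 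Equivalently, and this is the route we would try first, we apply Heine's third transformation (or Fine's transformation) to $G(z)$ with parameter choice $(\alpha,\beta,\gamma)=(-aq,-bq,-cq)$. This moves the variable to $\frac{\alpha\beta z}{\gamma}=\frac{abqz}{c}$ and produces $\frac{(-c^{-1})_n}{(aq/c,bq/c)_n}$-type coefficients. Inserting $z=q$ then gives the factor $\left(\frac{ab}{c}\right)^nq^{n(n+1)/2}$ after one application of the Cauchy/Euler expansion, which accounts for the quadratic exponent.

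The main obstacle is the point $z=q$ versus the pole at $z=1$. Heine's transformations are valid only for $|z|<1$, and several of the intermediate series converge only conditionally, or not at all, at the specialisations we need. Moreover, the second sum on the right, with its $q^{n^2}$ and $\left(\frac{ab}{c^2}\right)^n$, must emerge precisely as the correction created when the regular part is separated from the polar part $\frac{t_\infty}{1-z}$. To handle this, we will work with $(1-z)G(z)$, or with $G(z)-\frac{t_\infty}{1-z}$, which is analytic in a larger disc. We will perform the transformation there, identify the $q^{n^2}$ series as the image of the product term $t_\infty\sum_n z^n$ under the same transformation (a second Euler-type expansion doubles the exponent from $\binom{n+1}{2}$ to $n^2$ and introduces the extra factor $c^{-n}$), and then specialise.

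Finally, we check the normalising prefactor $\frac{c}{ab}$ and the starting index $n\ge1$ on both right-hand sums. We do this by comparing constant terms in $q$, or alternatively by verifying the special case $b=0$, where all series reduce to classical ones. As a safeguard, if the transformation bookkeeping becomes unwieldy, an alternative is available. Both sides are analytic in $c$ near $0$ after multiplying by $(-cq)_\infty$. We can then check that both sides satisfy the same first-order $q$-difference equation in $c$, obtained from $t_n(c)-t_n(cq)$ telescoping, and that they agree at one value of $c$.
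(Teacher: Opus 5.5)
There is a genuine gap: the step that actually produces the right-hand side is never supplied, and the tools you name cannot supply it. Your $G(z)=\sum_{n\ge0}\frac{(-aq,-bq)_n}{(-cq)_n}z^n$ has no $(q)_n$ in the denominator. It is therefore ${}_3\phi_2(q,-aq,-bq;-cq,0;q,z)$, not ${}_2\phi_1(-aq,-bq;-cq;q,z)$, so Heine's transformations do not apply, and Fine's handle only one numerator parameter. Even formally, Heine's third transformation with $(\alpha,\beta,\gamma)=(-aq,-bq,-cq)$ gives numerator parameters $c/a,c/b$ and argument $-abqz/c$. Neither $(-c^{-1})_n$ nor the denominators $(aq/c,bq/c)_n$ appear. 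The expansion route gives only the Lambert-type double sum $t_\infty\sum_{m,k\ge0}\frac{(c/a)_m(-a)^m(-b)^k}{(q)_m(q)_k}\frac{q^{m+k+1}}{1-q^{m+k+1}}$, and you do not say what collapses it. The ``obstacle'' you single out is also not one: since $t_n\to t_\infty$, $G$ converges for $|z|<1$, so $z=q$ is harmless. Your identification of the $q^{n^2}$-sum with the image of $t_\infty\sum_n z^n$ remains a heuristic.

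Your fallback is the right idea, but the equation is not derived and the uniqueness step fails. Write $L(c)$ for the left-hand side. Use $(1+cq)t_n(c)=(1+cq^{n+1})t_n(cq)$ both directly and inside the telescoped sum $\sum_{n\ge0}(t_{n+1}(c)-t_n(c))=t_\infty(c)-1$, and eliminate $\sum_n t_n(cq)q^{2n+2}$. This gives
\[
ab(1+cq)L(c)=(a-c)(b-c)L(cq)+c(1+cq)\left(t_\infty(c)-1\right).
\]
You cannot conclude from analyticity at $c=0$ plus agreement at one point. Each sum in the lemma has poles at $c=aq^j$ and $c=bq^j$ ($j\ge1$), which accumulate at $0$, so analyticity of the right-hand side there is as hard as the lemma. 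Without it, agreement at one point does not exclude adding $\frac{(c/a,c/b)_\infty}{(-cq)_\infty}g(c)$ with $g(qc)=g(c)$.

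Instead, solve the equation toward $c=\infty$. Replacing $c$ by $c/q$, it reads $L(c)=\rho(c)L(c/q)+\frac{q(1+c^{-1})}{(1-aq/c)(1-bq/c)}\left(1-t_\infty(c/q)\right)$ with $\rho(c)=\frac{abq^2(1+c^{-1})}{c(1-aq/c)(1-bq/c)}$, and $\prod_{i=0}^{n-1}\rho(cq^{-i})=\frac{(ab/c)^nq^{n(n+3)/2}(-c^{-1})_n}{(aq/c,bq/c)_n}$. Iterate this. The remainder vanishes because $L(cq^{-K})$ stays bounded for generic $c$. The constant parts of the inhomogeneous terms give exactly the first sum. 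Since $t_\infty(cq^{-n})=\frac{t_\infty(c)\,q^{n(n-1)/2}}{c^n(-c^{-1})_n}$, the remaining parts give exactly $-\frac{c\,t_\infty(c)}{ab}\sum_{n\ge1}\frac{(ab/c^2)^nq^{n^2}}{(aq/c,bq/c)_n}$. This completes a proof; the paper itself gives none and simply quotes the identity from Andrews' work on the lost notebook.
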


Recall the {\it theta function} 
\begin{equation}\label{theta}
\Theta(q):=\sum_{n\in \mathbb{Z}}q^{n^2}=\frac{\left(q^2;q^2\right)^5_{\infty}}{(q)^2_{\infty}\left(q^4;q^4\right)^2_{\infty}}.
\end{equation}
Note that
\begin{equation}\label{thetas}
\Theta(-q)=\sum_{n\in \mathbb{Z}}(-1)^nq^{n^2}=\frac{(q)^2_{\infty}}{\left(q^2;q^2\right)_{\infty}}.
\end{equation}
Moreover, we require the ``odd" theta function, defined by 
\begin{equation}\label{psi}
\psi(q):=\frac{\left(q^2;q^2\right)^2_{\infty}}{\left(q\right)_{\infty}}=\sum_{n\ge 0}q^{\frac{n(n+1)}{2}}.
\end{equation}
Next, we state a result of Ramanujan, which simply follows by splitting into odd and even exponents.
\begin{lemma}\label{thetadissection}
	We have
	\[
	\Theta(q)=\Theta\left(q^4\right)+2q\psi\!\left(q^8\right).
	\]
\end{lemma}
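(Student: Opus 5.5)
The identity in \Cref{thetadissection} is a direct $2$-dissection. The plan is to split the defining series $\Theta(q)=\sum_{n\in\mathbb{Z}}q^{n^2}$ according to the parity of $n$, and then recognise each piece as a theta series in $q^4$ or $q^8$.

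\textbf{Even indices.} Write $n=2m$ with $m\in\mathbb{Z}$, so that $q^{n^2}=q^{4m^2}$. Summing over all $m\in\mathbb{Z}$ gives exactly $\Theta(q^4)$.

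\textbf{Odd indices.} Write $n=2m+1$ with $m\in\mathbb{Z}$. Then
\[
n^2=4m^2+4m+1=1+8\cdot\frac{m(m+1)}{2}.
\]
The odd part of the series is therefore
\[
q\sum_{m\in\mathbb{Z}}\left(q^{8}\right)^{\frac{m(m+1)}{2}}.
\]
The only point requiring care is the passage from a sum over $\mathbb{Z}$ to the one-sided sum defining $\psi$ in \eqref{psi}. The map $m\mapsto -m-1$ is a bijection of $\mathbb{Z}$ that interchanges $\{m\ge 0\}$ and $\{m\le -1\}$, and it preserves $\frac{m(m+1)}{2}$. Hence
\[
\sum_{m\in\mathbb{Z}}x^{\frac{m(m+1)}{2}}=2\sum_{m\ge 0}x^{\frac{m(m+1)}{2}}=2\psi(x),
\]
and with $x=q^8$ the odd part equals $2q\psi(q^8)$.

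Adding the two contributions yields $\Theta(q)=\Theta(q^4)+2q\psi(q^8)$. All manipulations are rearrangements of absolutely convergent series for $|q|<1$, or can be read as identities of formal power series. The product formulas in \eqref{theta} and \eqref{psi} are not needed here; only the series definitions are used.
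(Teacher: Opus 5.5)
Your proof is correct and is the paper's argument: the paper justifies this lemma only by the remark that it follows from splitting $\Theta(q)$ into even and odd exponents (equivalently, even and odd $n$). You have supplied the details of that split, including the symmetry $m\mapsto -m-1$ that produces the factor $2$ in front of $q\psi(q^8)$.
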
 

Due to Andrews \cite[(4.3)]{A}, we have the following Lerch-type representation of\,\footnote{The factor in the denominator in the sum on the right-hand side of \cite[(4.3)]{A} is $(q;q^2)_{n+1}$ (after fixing the typographical error $(q)_{2n+1}$).} $B(q)$. 

\begin{lemma}\label{And}
	We have 
	\[
	B(q)=\frac{\left(-q^2;q^2\right)_{\infty}}{\left(q^2;q^2\right)_{\infty}}\sum_{n\in \mathbb{Z}}\frac{(-1)^n q^{2n(n+1)}}{1-q^{2n+1}}.
	\]
\end{lemma}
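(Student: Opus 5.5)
I will not try to transform the summand of $B(q)$ term by term. Instead I plan to introduce an auxiliary variable $z$ and use partial fractions, which is the standard way to turn Eulerian sums with squared denominators $(q;q^2)_{n+1}^2$ into Appell--Lerch sums. Concretely, set
\[
F(z):=\sum_{n\ge 0}\frac{\left(-q^2;q^2\right)_n q^{n(n+1)}}{\left(zq;q^2\right)_{n+1}\left(z^{-1}q;q^2\right)_{n+1}},
\]
so that $B(q)=F(1)$. For $|q|<1$ and $z$ in a suitable annulus, the series converges absolutely because of the factor $q^{n(n+1)}$. As a function of $z$, $F$ is meromorphic on $\mathbb{C}^\ast$, with at most simple poles at $z=q^{\pm(2k+1)}$, $k\in\mathbb{N}_0$. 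It also has a manifest symmetry $F(z)=F(z^{-1})$.

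The key steps, in order, are as follows.

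First, compute the residue of $F$ at $z=q^{2k+1}$. Only the terms with $n\ge k$ contribute. After evaluating $(zq;q^2)_{n+1}$ with the vanishing factor removed, the residue reduces to a sum over $n\ge k$. I expect this sum to be summable in closed form, either by the $q$-binomial theorem or by a terminating $q$-Chu--Vandermonde evaluation in base $q^2$. The closed form should be a multiple of $(-1)^kq^{2k(k+1)}\frac{(-q^2;q^2)_\infty}{(q^2;q^2)_\infty}$, up to elementary factors.

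Second, compare $F(z)$ with the candidate
\[
G(z):=\frac{\left(-q^2;q^2\right)_{\infty}}{\left(q^2;q^2\right)_{\infty}}\sum_{n\in\mathbb{Z}}\frac{(-1)^nq^{2n(n+1)}}{(1-zq^{2n+1})}\cdot(\text{symmetrizing factor}).
\]
This candidate is chosen to have the same poles and residues and the same symmetry. The difference $F-G$ is then holomorphic on $\mathbb{C}^\ast$. Next I will check a $q$-difference equation under $z\mapsto q^2z$, or alternatively boundedness as $z\to 0,\infty$ along suitable sequences. This should force $F-G$ to be a constant, or an elliptic-type function that must vanish.

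Third, fix that constant by evaluating at a convenient point, for instance $z=-1$ or the limit $z\to q^{-1}$ after clearing the pole, and then specialize to $z=1$. There the terms $1-zq^{2n+1}$ and $1-z^{-1}q^{2n+1}$ combine. Reindexing $n\mapsto -n-1$ should turn the symmetric sum into the single sum $\sum_{n\in\mathbb{Z}}\frac{(-1)^nq^{2n(n+1)}}{1-q^{2n+1}}$ stated in the lemma.

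The main obstacle is the residue computation in the first step. I need the inner sum over $n\ge k$ to collapse to a closed product, and whether it does depends on the precise normalization of $F(z)$. If it does not collapse cleanly, I will fall back on Bailey's lemma in base $q^2$: take $a=q^2$ (shifting the index to absorb the $(q;q^2)_{n+1}$), choose $\rho_1=zq$ and $\rho_2=z^{-1}q$ (so the $\rho$-factors produce the two denominators after inversion), and use the unit Bailey pair or a pair involving $(-q^2;q^2)_n$. The resulting $\alpha_n$-side gives a bilateral sum with $(-1)^nq^{2n(n+1)}$ and a denominator $(1-zq^{2n+1})(1-z^{-1}q^{2n+1})$, which partial fractions reduce to the stated form at $z=1$.
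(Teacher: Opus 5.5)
The paper gives no proof of this lemma; it quotes it from Andrews \cite[(4.3)]{A}. Your route is a genuinely different, self-contained analytic argument, and it works. It even yields the two-variable identity $F(z)=G(z)$ on $\mathbb{C}^\ast$, where
\[
G(z):=\frac{(-q^2;q^2)_\infty}{(q^2;q^2)_\infty}\sum_{n\in\mathbb{Z}}\frac{(-1)^nq^{2n(n+1)}}{1-zq^{2n+1}}.
\]
So the natural two-variable version of $B$ is an Appell--Lerch sum. The cost, compared with a citation, is a residue computation plus growth estimates.

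No symmetrizing factor is needed. The identity $G(z^{-1})=G(z)$ follows from $\frac{1}{1-x}+\frac{1}{1-x^{-1}}=1$, the shift $n\mapsto -n-1$, and $\sum_{n\in\mathbb{Z}}(-1)^nq^{2n(n+1)}=0$.

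The obstacle you flag dissolves. Put $n=k+m$ and use $(q^2;q^2)_k(q^{2k+2};q^2)_{n+1}=(q^2;q^2)_{n+k+1}$. This gives
\[
\lim_{z\to q^{2k+1}}\Bigl(1-\frac{q^{2k+1}}{z}\Bigr)F(z)=(-1)^kq^{2k(k+1)}\frac{(-q^2;q^2)_k}{(q^2;q^2)_{2k+1}}\sum_{m\ge 0}\frac{(-q^{2k+2};q^2)_m\,q^{m(m+1)+2km}}{(q^2;q^2)_m\,(q^{4k+4};q^2)_m}.
\]
The inner sum is ${}_1\phi_1(a;c;q^2,c/a)$ with $a=-q^{2k+2}$ and $c=q^{4k+4}$. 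It therefore equals $(-q^{2k+2};q^2)_\infty/(q^{4k+4};q^2)_\infty$. This is the $b\to\infty$ limit of $q$-Gauss, not the $q$-binomial theorem or a terminating Chu--Vandermonde. The residue comes out to exactly $(-1)^kq^{2k(k+1)}(-q^2;q^2)_\infty/(q^2;q^2)_\infty$, which is the residue of the $n=-k-1$ term of $G$. The poles at $q^{-2k-1}$ follow by symmetry.

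Two of your hedges should be replaced.

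First, evaluating at $z=-1$, or looking at $z\to q^{-1}$, cannot fix the constant. Nothing is known independently at $z=-1$, and the polar parts agree by construction.

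Second, the $q$-difference route is impractical. $G$ does satisfy $G(q^2z)=-q^2z^2G(z)-qz$, using $\Theta(-q^2)(-q^2;q^2)_\infty/(q^2;q^2)_\infty=1$. But proving the same equation for $F$ from the Eulerian series is as hard as the lemma itself.

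Use growth instead, on the circles $|z|=|q|^{-2j}$.
\begin{enumerate}
\item For each factor, $|zq^{2i+1}|=|q|^{2i+1-2j}$ is an odd power of $|q|$. So $|1-zq^{2i+1}|$ equals $\max(1,|q|^{2i+1-2j})$ up to a factor in $[1-|q|^{|2i+1-2j|},1+|q|^{|2i+1-2j|}]$. The products of these factors are bounded above and below independently of $j$. The factors of $(z^{-1}q;q^2)_{n+1}$ are harmless.
\item Hence the $n$th term of $F$ is $O(|q|^{(n+1)(2j-1)})$ for $n<j$ and $O(|q|^{n(n+1)+j^2})$ for $n\ge j$. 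This gives $F(z)=O(|z|^{-1})$.
\item The same split into $n<j$ and $n\ge j$ gives $G(z)=O(|z|^{-1})$.
\item By $z\mapsto z^{-1}$, both $F$ and $G$ are $O(|z|)$ on the circles $|z|=|q|^{2j}$.
\item The maximum principle on the annuli between consecutive circles shows that $F-G$ is bounded on $\mathbb{C}^\ast$. It is therefore constant, and the decay forces the constant to be $0$.
\end{enumerate}

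Finally, $z=1$ is not a pole, and $G(1)$ is literally the sum in the lemma. No reindexing is needed, and the Bailey-lemma fallback can be dropped.
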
 

From \cite[(4.1)]{Mao}, we obtain that the restriction of $B(q)$ to exponents divisible by $4$ is a simple modular form.
\begin{lemma}\label{Mao}
	We have
	\[
	\sum_{n\ge 0}c_B(4n)q^n=\frac{\left(q^{2};q^2\right)^{14}_{\infty}}{(q)^9_{\infty} \left(q^4;q^4\right)^4_{\infty}}.
	\]
\end{lemma}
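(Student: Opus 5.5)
The plan is to derive \Cref{Mao} from Andrews' Lerch-type representation in \Cref{And}: sieve the exponents divisible by $4$, show that the mock part disappears, and identify what remains as an eta quotient.

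First rewrite the prefactor in \Cref{And} as
\[
\frac{\left(-q^2;q^2\right)_{\infty}}{\left(q^2;q^2\right)_{\infty}}=\frac{\left(q^4;q^4\right)_{\infty}}{\left(q^2;q^2\right)^2_{\infty}}.
\]
This is a function of $q^2$. Hence the $4$-dissection of $B(q)$ reduces to two steps: take the $2$-dissection of the Appell--Lerch sum
\[
L(q):=\sum_{n\in\mathbb{Z}}\frac{(-1)^nq^{2n(n+1)}}{1-q^{2n+1}},
\]
and then take the $2$-dissection of the prefactor together with the even part of $L$, both as functions of $q^2$.

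For the first step, expand $\frac{1}{1-q^{2n+1}}$ geometrically. For $n\ge0$ use $\sum_{m\ge0}q^{(2n+1)m}$. For $n<0$ use $-\sum_{m\ge1}q^{-(2n+1)m}$, equivalently substitute $n\mapsto -n-1$. This turns $L$ into a Hecke-type indefinite double sum
\[
\sum_{\operatorname{sgn}(n)=\operatorname{sgn}(m)}\pm(-1)^nq^{2n^2+2n+(2n+1)m}.
\]
Then split according to the parities of $n$ and $m$, and keep only the terms whose exponent is $\equiv 0\pmod 4$, after combining with the prefactor. I expect the indefinite double sums from the surviving residue classes to pair up and collapse, via the standard identities of Andrews type for such sums, into a product of unary theta series such as $\Theta$, $\Theta(-q)$ and $\psi$. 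Lemma~\ref{thetadissection} and \eqref{thetas}, \eqref{psi} can then be used to convert these into the infinite product $\frac{(q^2;q^2)^{14}_\infty}{(q)^9_\infty(q^4;q^4)^4_\infty}$.

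As a more conceptual check, and as a fallback if the elementary manipulation is messy, I would argue modularly. $B(q)$ is (up to a $q$-power) the holomorphic part of a harmonic Maass form of weight $1/2$ whose nonholomorphic part is a period integral of a unary theta function. The exponents of that theta function lie in a fixed residue class modulo $4$ that does not contain $0$; this needs to be verified from the completion. Consequently the sieving operator $\sum a(n)q^n\mapsto\sum a(4n)q^n$ annihilates the shadow, and the result is a holomorphic modular form of weight $1/2$ on some $\Gamma_0(N)$ with $N$ dividing a power of $2$. The right-hand side is an eta quotient of weight $\frac{14-9-4}{2}=\frac12$ on the same group. It then suffices to check holomorphy at the cusps and to compare coefficients up to the Sturm bound.

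The main obstacle is this vanishing step: either showing explicitly that the indefinite parts cancel in the residue class $0\pmod 4$, or pinning down the exact shadow of $B$ and its residue class. Everything after that is routine product manipulation or a finite computation.
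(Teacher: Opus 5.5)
\textbf{There is a gap.} The paper gives no proof of this lemma; it imports it from Mao's equation (4.1). Your first route has the right skeleton: the prefactor in \Cref{And} is a function of $q^2$, so you take the even part of $L$ and then $2$-dissect once more. But the only nontrivial step, that the indefinite double sums ``pair up and collapse'', is neither identified nor carried out.

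You are also looking for it in the wrong place. No Hecke-type machinery is needed, because the mock part of $B$ already disappears on \emph{all} even exponents. Since $\frac{1}{1-q^{2n+1}}=\frac{1+q^{2n+1}}{1-q^{4n+2}}$, the even part of $L$ is $\sum_{n\in\mathbb{Z}}\frac{(-1)^nq^{2n(n+1)}}{1-q^{4n+2}}$. After $q^2\mapsto q$, this is the case (base $q^2$, $z=q$) of the classical partial-fraction identity behind the crank generating function,
\[
\sum_{n\in\mathbb{Z}}\frac{(-1)^nq^{\frac{n(n+1)}{2}}}{1-zq^n}=\frac{(q)^2_\infty}{(z)_\infty\left(\frac qz\right)_\infty}.
\]
Hence it equals $\frac{(q^2;q^2)^2_\infty}{(q;q^2)^2_\infty}=\psi(q)^2$. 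In your double-sum language, even exponents force $m$ even, and the resulting form $n(n+2m)$ splits over $\mathbb{Q}$.

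Now use $\psi(q)^2=\Theta(q)\psi(q^2)$ and $\Theta(q)\Theta(-q)=\Theta(-q^2)^2$, both immediate from \eqref{theta}, \eqref{thetas}, \eqref{psi}. This gives
\[
\sum_{n\ge0}c_B(2n)q^n=\frac{\left(q^2;q^2\right)_\infty}{(q)^2_\infty}\psi(q)^2=\frac{\psi(q)^2}{\Theta(-q)}=\frac{\Theta(q)^2\psi\!\left(q^2\right)}{\Theta\!\left(-q^2\right)^2}.
\]
Let $r_2(n)$ count representations of $n$ as a sum of two squares. Then $r_2(2n)=r_2(n)$ via $(x,y)\mapsto(\frac{x+y}{2},\frac{x-y}{2})$, so the even part of $\Theta(q)^2$ is $\Theta(q^2)^2$. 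Therefore $\sum_{n\ge0}c_B(4n)q^n=\frac{\Theta(q)^2\psi(q)}{\Theta(-q)^2}$, which by \eqref{theta}, \eqref{thetas}, \eqref{psi} is exactly $\frac{(q^2;q^2)^{14}_\infty}{(q)^9_\infty(q^4;q^4)^4_\infty}$.

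Your modular fallback contains a false step: the sieved series is not a holomorphic modular form, so the check of holomorphy at the cusps would fail. The right-hand side equals $q^{-\frac18}\eta(2\tau)^{14}\eta(\tau)^{-9}\eta(4\tau)^{-4}$. Its order at the cusp $0$ of $\Gamma_0(4)$ is $\frac{4}{24}(-9+7-1)=-\frac12$. The pole comes from the factor $\Theta(-q)^{-2}$, since $\Theta(-q)$ vanishes at $0$ while $\Theta(q)^2\psi(q)$ does not.

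To run a Sturm-bound argument you would first have to determine the principal parts of the completed $B$ at every cusp and work with weakly holomorphic forms. You would also still need the residue-class claim for the shadow, which you leave unverified. That claim is true: by the computation above together with \Cref{CM}, the non-modular part of $B$ sits only on exponents $\equiv 3\pmod 4$. But it cannot be assumed as input.
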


From Chan and Mao \cite[(2.13)]{CM}, we can conclude the following restriction of $B$ to exponents congruent to $1$ modulo $4$.
\begin{lemma}\label{CM}
	We have 
	\[
	\sum_{n\ge 0}c_B(4n+1)q^n
	=\frac{2\left(q^2;q^2\right)^8_{\infty}}{(q)^7_{\infty}}. 
	\]
\end{lemma}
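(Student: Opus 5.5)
The plan is to extract this lemma from the dissection formulas of Chan and Mao \cite{CM}, and to reprove the needed dissection from \Cref{And} if necessary. The argument has two stages. First, isolate the odd part $\sum_{n\ge0}c_B(2n+1)q^n$ of $B(q)$ and show that it is an eta quotient, i.e. a genuine modular form rather than a mock one. Then $2$-dissect that eta quotient and keep the odd-indexed coefficients, which gives $\sum_{n\ge0} c_B(4n+1)q^n$. Equation (2.13) of \cite{CM} is stated in this form for the odd part (equivalently, for the $q^{4n+1}$ terms). So the first task is to rewrite it in our normalization. After replacing $q^4\mapsto q$, the right-hand side should simplify to $2(q^2;q^2)_\infty^8/(q)_\infty^7$.

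To make the argument self-contained, I would start from \Cref{And}:
\[
\frac{(q^2;q^2)_\infty}{(-q^2;q^2)_\infty}B(q)=\sum_{n\in\mathbb Z}\frac{(-1)^nq^{2n(n+1)}}{1-q^{2n+1}}.
\]
The prefactor $(-q^2;q^2)_\infty/(q^2;q^2)_\infty$ is a function of $q^2$, so the odd part of $B$ equals this prefactor times the odd part of the Appell--Lerch sum. To split the sum, I would write
\[
\frac{1}{1-q^{2n+1}}=\frac{1+q^{2n+1}}{1-q^{4n+2}}.
\]
The odd part of the sum is then $\sum_n(-1)^nq^{2n^2+4n+1}/(1-q^{4n+2})$. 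The key step is to evaluate this Lerch sum in $q^2$ as a theta quotient. I would do this with the standard identity evaluating $\sum_n (-1)^n q^{an^2+bn}/(1-xq^{cn})$ at a special value of $x$, for example via the partial fraction expansion of $\frac{(q)_\infty^2}{(x)_\infty(q/x)_\infty}$. In the present parameters the exponent shift makes the mock part cancel, leaving a product. This product should come out as a constant times a quotient such as $q\,(q^4;q^4)_\infty^a(q^8;q^8)_\infty^b/(q^2;q^2)_\infty^c$.

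Once the odd part is known as a product in $q^2$, I would set $q^2\mapsto q$. Then I would $2$-dissect using \Cref{thetadissection} together with the companion dissections $\psi(q)^2$, $1/\Theta(-q)$, $\psi(q)\psi(q^3)$, etc. These are all obtainable from \Cref{thetadissection} and $\Theta(q)\Theta(-q)=\Theta(-q^2)^2$. Extracting the odd powers, dividing by $q$, and sending $q^2\mapsto q$ should yield $2(q^2;q^2)_\infty^8/(q)_\infty^7=2\psi(q)^4\cdot(q)_\infty^{-3}(q^2;q^2)_\infty^{0}\cdots$. As a consistency check I would compare the first few coefficients against the series of $B$ computed directly from \eqref{McIntosh}; for instance $c_B(1)=2$ matches the constant term $2$.

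The main obstacle is the Lerch-sum evaluation in the middle step, namely showing that the odd part of the Appell--Lerch sum is a pure product. If a direct identity does not apply, my first fallback is the modular route. I would complete $B$ to a harmonic Maass form, observe that the shadow (a unary theta function) is supported on exponents that vanish in the progression $1\pmod 4$, and then verify the identity of holomorphic modular forms via a Sturm bound.
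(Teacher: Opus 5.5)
Quoting \cite[(2.13)]{CM} is all the paper does for this lemma, so that part of your plan agrees with it. Your self-contained route, however, rests on a false step: the odd part of $B$ is not an eta quotient, and the Lerch sum does not collapse to a product. After $q^2\mapsto q$, use $(-q;q)_\infty/(q;q)_\infty=1/\Theta(-q)$ and the fact that the terms $n$ and $-n-1$ coincide. Your computation then gives
\[
\sum_{n\ge0}c_B(2n+1)q^n=\frac{1}{\Theta(-q)}\sum_{n\in\mathbb{Z}}\frac{(-1)^nq^{n^2+2n}}{1-q^{2n+1}}=\frac{2}{\Theta(-q)}\sum_{n\ge0}\frac{(-1)^nq^{n^2+2n}}{1-q^{2n+1}}.
\]
In base $q^2=e^{2\pi i\tau'}$ this sum is $\sum_n(-1)^nb^n(q^2)^{n(n+1)/2}/(1-a(q^2)^n)$ with $a=b=q$. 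The expansion of $(q)_\infty^2/((x)_\infty(q/x)_\infty)$ only covers the untwisted case $b=1$. In Zwegers' notation the sum is $a^{-1/2}\vartheta(v)\mu(u,v)$ with $u=v=\tau'/2$. The correction term $\frac{i}{2}a^{-1/2}\vartheta(v)R(u-v)=\frac{i}{2}a^{-1/2}\vartheta(v)R(0)$ therefore does not vanish, since $\partial_{\bar\tau}R(0;\tau)$ is a nonzero multiple of $y^{-1/2}\overline{\eta(\tau)^3}$. So the mock part does not cancel.

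You can also see this directly. We have $\sum_{n\ge0}(-1)^nq^{n^2+2n}/(1-q^{2n+1})=1+q+q^2+q^4+q^5+q^7+2q^8+\cdots$. The coefficient of $q^N$ is the truncated divisor sum $\sum_{d\mid 4N+3,\,d<\sqrt{4N+3}}(-1)^{(d-1)/2}$, which has the typical shape of an indefinite theta series. Writing this series as $\prod_k(1-q^k)^{-e_k}$ gives $e_1=1$ but $e_3=-1$. That is impossible for a product of powers of $(q^{2^j};q^{2^j})_\infty$, which is the shape you anticipate.

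What actually happens is the following. Since $\vartheta(\tau'/2;\tau')=-iq^{-1/4}\Theta(-q)$, the theta factor cancels the prefactor $1/\Theta(-q)$. The nonholomorphic part of the completed odd part is then a multiple of $q^{-3/4}R(0;\tau')$, whose terms carry the exponents $-(k^2+k+1)$, all of them odd. Thus the mock part sits exactly on $c_B(4n+3)$, and a product can only appear after one further $2$-dissection.

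To do this elementarily, write $1/\Theta(-q)=(\Theta(q^4)+2q\psi(q^8))/\Theta(-q^2)^2$, using \Cref{thetadissection} and $\Theta(q)\Theta(-q)=\Theta(-q^2)^2$. You would then have to show that $\Theta(q^4)L_0(q)+2q\psi(q^8)L_1(q)$ is a theta product, where $L_0,L_1$ are the even and odd parts of the Lerch sum above. This is a genuine cancellation between two mock pieces, and the partial fraction identity does not provide it.

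Your fallback does work: complete $B$, check that the nonholomorphic part misses the progression $4n+1$, then compare two weight $1/2$ forms up to a Sturm bound. The computation above supplies its key input. For the same reason, reading \cite[(2.13)]{CM} as a formula for the odd part, ``equivalently'' for the $q^{4n+1}$ terms, conflates two different things: only the $4n+1$ half is modular.
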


 Next, we require an identity between $\o(q)$ and $f(q)$, where Ramanujan's third order mock theta function $f(q)$ is defined by
\begin{equation*}
f(q):=\sum_{n\ge0}\frac{q^{n^2}}{\left(-q\right)^2_{n}}.
\end{equation*}

Due to Watson \cite[p. 66]{W} (see also \cite[(4)]{APSY}), we have the following identity. 
\begin{lemma}\label{odissection}
We have 
\[
f\!\left(q^8\right)+2q\o(q)+2q^3\o\!\left(-q^4\right)=\frac{\Theta(q)\Theta^2\left(q^2\right)}{\left(q^4;q^4\right)^2_{\infty}}:=G(q).
\]
\end{lemma}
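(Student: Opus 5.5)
The plan is to rewrite every mock theta function in the identity as a Lerch-type (Appell--Lerch) sum, in the same spirit as \Cref{And}. Then one combines the three pieces on the left-hand side until only theta quotients remain.

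Concretely, I would use the classical representations
\[
f(q)=\frac{2}{(q)_\infty}\sum_{n\in\mathbb{Z}}\frac{(-1)^nq^{\frac{n(3n+1)}{2}}}{1+q^n},\qquad
\omega(q)=\frac{1}{\left(q^2;q^2\right)_\infty}\sum_{n\in\mathbb{Z}}\frac{(-1)^nq^{3n^2+3n}}{1-q^{2n+1}}.
\]
Both follow from Watson's transformation, or equivalently from \Cref{lemident1} with suitable specializations of $a,b,c$. Substituting $q\mapsto q^8$ in $f$ and $q\mapsto -q^4$ in the second copy of $\omega$ turns the left-hand side of \Cref{odissection} into three Appell--Lerch sums in the common base $q^8$, after a dissection of the $\omega(q)$-sum.

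For that dissection I would split $\frac{1}{1-q^{2n+1}}$ as $\frac{1+q^{2n+1}}{1-q^{4n+2}}$, and then again modulo $8$ in the exponent of the denominator. This produces sums with denominators $1\pm q^{8n+c}$.

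The key step is to express everything in terms of Hickerson--Mortenson's function $m(x,q,z)$. There are two identities to exploit. One is the change-of-$z$ formula $m(x,q,z_1)-m(x,q,z_0)=$ (explicit theta quotient). The other expresses $m(x,q,z)$ through sums in the base $q^{k^2}$, namely the "$n$-dissection'' formula. After these manipulations, the mock (non-theta) parts of $f(q^8)$, $2q\omega(q)$ and $2q^3\omega(-q^4)$ should cancel exactly. This is forced, because the shadows must cancel for the right-hand side $G(q)$ to be a modular form. What survives is a finite sum of theta quotients.

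The main obstacle is bookkeeping: choosing the parameters $z$ so that the residual theta quotients are as simple as possible, and tracking the sign and $q$-power prefactors. As a fallback for the final step I would argue modularly rather than by $q$-series manipulation. Both sides of the residual identity, multiplied by a suitable eta quotient, are holomorphic modular forms of some weight on $\Gamma_0(64)$ (level dictated by $q^8$ and the theta functions $\Theta(q),\Theta(q^2)$, $(q^4;q^4)_\infty$). It therefore suffices to check agreement of $q$-expansions up to the Sturm bound.

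An alternative route is to first verify the shadow cancellation via Zwegers' completions. This shows that the left-hand side of \Cref{odissection} is a weakly holomorphic modular form, and then bounding orders at cusps and comparing coefficients gives equality with $G(q)$.
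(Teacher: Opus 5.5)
The paper gives no proof of this lemma; it quotes it from Watson (see also Andrews--Passary--Sellers--Yee). Your proposal is therefore a genuinely different, self-contained route, and its main line works. The cancellation you anticipate can be made completely explicit.

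Hickerson and Mortenson give $f(q)=4m(-q,q^3,q)+(\text{theta quotient})$ and $\omega(q)=-2q^{-1}m(q,q^6,q^2)+(\text{theta quotient})$. Their $2$-dissection formula, together with $m(x,q,z)=x^{-1}m(x^{-1},q,z^{-1})$ and the change-of-$z$ formula, yields
\[
m\left(q,q^6,q^2\right)=m\left(-q^8,q^{24},q^8\right)+q^{-1}m\left(-q^4,q^{24},q^8\right)+(\text{theta quotients}).
\]
Now $f(q^8)$ contains exactly $4m(-q^8,q^{24},q^8)$, $2q^3\omega(-q^4)$ contains exactly $4q^{-1}m(-q^4,q^{24},q^8)$, and $2q\omega(q)$ contains $-4m(q,q^6,q^2)$. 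So the Appell--Lerch parts cancel identically. What remains is an identity between theta quotients, to be checked by theta-function manipulations or a Sturm bound.

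Compared with the citation, this proves the lemma from scratch and explains structurally why exactly this combination of $f(q^8)$, $\omega(q)$, $\omega(-q^4)$ is modular. The price is the bookkeeping of the theta-quotient remainders and a final finite verification.

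Several points in your write-up need correcting.

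First, the cancellation must come from this computation, or from an explicit comparison of Zwegers' shadows. Inferring it from the modularity of $G$ presupposes the identity you are proving.

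Second, the common base is $q^{24}$, not $q^8$. After rewriting $\frac{1}{1-q^{2n+1}}$ over $1-q^{8n+4}$, the quadratic exponent $3n(n+1)$ is not of the form $4\ell n(n+1)$ with $\ell\in\mathbb{Z}$, which an Appell--Lerch sum in base $q^8$ would require. The summation index has to be dissected as well, and that is exactly what the $2$-dissection of $m(q,q^6,z)$ does.

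Third, the residual identity does not live on $\Gamma_0(64)$. In the Hickerson--Mortenson representation the theta part of $f(q^8)$ is $\frac{(q^{24};q^{24})_\infty^4}{(q^{48};q^{48})_\infty^2(q^8;q^8)_\infty}$, and the modular normalization of the right-hand side is $q^{-1/3}G(q)=\Theta(\tau)\Theta(2\tau)^2/\eta(4\tau)^2$. So the level is divisible by $3$, and the eta multipliers must be dealt with before Sturm's bound applies.

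Finally, the Appell--Lerch representations of $f$ and $\omega$ come from Watson's transformation. They do not follow from Lemma~\ref{lemident1}.
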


Due to Andrews, Passary, Sellers, and Yee \cite[Lemma 3.1]{APSY}, we have the following dissection of $G(q)$.
\begin{lemma}\label{Fdissection}
We have
\[
G\left(q\right)=\sum_{j=0}^{4} q^jG_j\!\left(q^4\right),
\]
where
\begin{align*}
G_0(q)&:=\frac{\Theta^3(q)}{(q)^2_{\infty}},\hspace{2.6 cm}  G_1(q):=\frac{2\Theta^2(q)\psi\left(q^2\right)}{(q)^2_{\infty}},\\ G_2(q)&:=\frac{4\Theta(q)\psi^2\!\left(q^2\right)}{(q)^2_{\infty}},\hspace{1.5 cm} G_3(q):=\frac{8\psi^3\!\left(q^2\right)}{(q)^2_{\infty}}.
\end{align*}
\end{lemma}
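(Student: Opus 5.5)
The plan is to expand the numerator $\Theta(q)\Theta^2(q^2)$ of $G(q)$ into pieces whose exponents lie in fixed residue classes modulo $4$. The denominator $(q^4;q^4)^2_{\infty}$ is already a function of $q^4$, so it can be carried along unchanged. The ingredients are \Cref{thetadissection} and a companion identity for $\Theta^2$. Note that only $G_0,\dots,G_3$ are defined, so the term with $j=4$ in the stated sum is to be read as zero. The expansion below produces only $j=0,1,2,3$.

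\textbf{Step 1.} First I would prove
\[
\Theta^2(q)=\Theta^2\!\left(q^2\right)+4q\psi^2\!\left(q^4\right).
\]
Write $\Theta^2(q)=\sum_{a,b\in\mathbb{Z}}q^{a^2+b^2}$ and split according to the parity of $a+b$.
\begin{itemize}
\item If $a+b$ is even, set $a=m+n$ and $b=m-n$ with $(m,n)\in\mathbb{Z}^2$, which is a bijection onto this set of pairs. Then $a^2+b^2=2(m^2+n^2)$, so these terms contribute $\Theta^2(q^2)$.
\item If $a+b$ is odd, set $a=m+n+1$ and $b=m-n$, again a bijection. Then $a^2+b^2=2m(m+1)+2n(n+1)+1$, so these terms contribute $q\big(\sum_{m\in\mathbb{Z}}q^{2m(m+1)}\big)^2$.
\end{itemize}
Since $m\mapsto -m-1$ pairs up the terms, $\sum_{m\in\mathbb{Z}}q^{2m(m+1)}=2\sum_{m\ge0}q^{2m(m+1)}=2\psi(q^4)$ by \eqref{psi}. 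This gives the identity.

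\textbf{Step 2.} Replacing $q$ by $q^2$ in Step 1 gives $\Theta^2(q^2)=\Theta^2(q^4)+4q^2\psi^2(q^8)$. \Cref{thetadissection} gives $\Theta(q)=\Theta(q^4)+2q\psi(q^8)$. Multiplying these two expressions yields
\[
\Theta(q)\Theta^2\!\left(q^2\right)=\Theta^3\!\left(q^4\right)+2q\,\Theta^2\!\left(q^4\right)\psi\!\left(q^8\right)+4q^2\,\Theta\!\left(q^4\right)\psi^2\!\left(q^8\right)+8q^3\psi^3\!\left(q^8\right).
\]

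\textbf{Step 3.} Divide by $(q^4;q^4)^2_{\infty}$. The four terms are then exactly $q^jG_j(q^4)$ for $j=0,1,2,3$, with the $G_j$ as in the statement. Each $G_j(q^4)$ is a power series in $q^4$, so this is a genuine $4$-dissection.

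The only step with real content is the sum-of-two-squares identity in Step 1. I expect the main point needing care there to be checking that both changes of variables are bijections onto the corresponding parity classes. After that, Steps 2 and 3 are direct bookkeeping.
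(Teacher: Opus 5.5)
Your proof is correct and complete. The paper does not prove this lemma; it quotes it from Andrews, Passary, Sellers, and Yee (their Lemma 3.1). Your argument makes the step self-contained.

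The only ingredient beyond \Cref{thetadissection} is the classical identity $\Theta^2(q)=\Theta^2(q^2)+4q\psi^2(q^4)$. You prove it directly by splitting $a^2+b^2$ according to the parity of $a+b$. Both substitutions are bijections, because $a+b$ and $a-b$ always have the same parity. The odd-class computation $a^2+b^2=2m(m+1)+2n(n+1)+1$ is also right.

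In the literature this identity is usually obtained by adding the standard relations $\Theta^2(q)+\Theta^2(-q)=2\Theta^2(q^2)$ and $\Theta^2(q)-\Theta^2(-q)=8q\psi^2(q^4)$. So your route is the standard one, written out from first principles. The citation buys the paper brevity. Your version lets the reader check the constants $2,4,8$ in $G_1,G_2,G_3$ directly.

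You are also right that the upper limit $4$ in the stated sum is a typo for $3$. Only $G_0,\dots,G_3$ are defined, and the expansion produces no other terms.
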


Finally, we state congruences for $c_{\o}(n)$ \cite[Theorem 3.4]{APSY}. 
\begin{lemma}\label{Ref1}
	For $n\in \mathbb{N}_0$, we have 
	\begin{align*}
	c_{\o}\left(8n+3\right)&\equiv 0\!\pmod{4},\\
	c_{\o}\left(8n+5\right)&\equiv 0\!\pmod{8},\\
	c_{\o}\left(16n+12\right)&\equiv 0\!\pmod{4}.
	\end{align*}
\end{lemma}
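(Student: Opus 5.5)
The plan is to read off all three congruences from Watson's identity (\Cref{odissection}) combined with the $4$-dissection of $G(q)$ in \Cref{Fdissection}.

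\textbf{Setting up the extraction.} On the left-hand side of \Cref{odissection}, $f(q^8)$ only involves exponents $\equiv 0 \pmod 8$ and $2q^3\o(-q^4)$ only exponents $\equiv 3 \pmod 4$, while $2q\o(q)$ contributes $2c_\o(m)$ at $q^{m+1}$. Hence:
\begin{itemize}
\item for $m\in\{8n+3,\,8n+5,\,16n+12\}$ the exponent $m+1$ is $\equiv 4 \pmod 8$, $\equiv 2 \pmod 4$, $\equiv 1 \pmod 4$ respectively;
\item in each case $2c_\o(m)$ equals a coefficient of $G_0(q^4)$, $q^2G_2(q^4)$, $qG_1(q^4)$ respectively.
\end{itemize}
Concretely,
\[
2c_\o(8n+3)=[q^{2n+1}]G_0(q),\quad 2c_\o(8n+5)=[q^{2n+1}]G_2(q),\quad 2c_\o(16n+12)=[q^{4n+3}]G_1(q).
\]

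\textbf{Key rewriting.} By \eqref{theta},
\[
\frac{1}{(q)_\infty^2}=\Theta(q)\,E(q^2),\qquad E(q):=\frac{(q^2;q^2)_\infty^2}{(q;q)_\infty^5},
\]
so multiplying by $E(q^2)$ preserves the parity of exponents. This gives
\[
G_0=\Theta^4(q)E(q^2),\qquad G_2=4\Theta^2(q)\psi^2(q^2)E(q^2),\qquad G_1=2\Theta^3(q)\psi(q^2)E(q^2).
\]

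\textbf{First congruence.} Odd coefficients of $\Theta^4$ are $r_4(m)=8\sigma(m)$ for $m$ odd, so the odd part of $G_0$ is $\equiv 0\pmod 8$. Therefore $c_\o(8n+3)\equiv0\pmod4$.

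\textbf{Second congruence.} For odd $m$, $r_2(m)=4\sum_{d\mid m}\chi_{-4}(d)$, so the odd part of $\Theta^2$ is divisible by $4$. Hence the odd part of $G_2$ is $\equiv0\pmod{16}$, giving $c_\o(8n+5)\equiv0\pmod8$. (These classical formulas for $r_2,r_4$ are the only input beyond the paper; alternatively one uses $\Theta^2(q)=\Theta^2(q^2)+4q\psi^2(q^4)$ and a similar Jacobi-type expression for $\Theta^4$.)

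\textbf{Third congruence.} Write $\Theta=1+2T$. Then
\[
\Theta^3-\Theta=\Theta\cdot 4T(1+T)\equiv 0 \pmod 4,
\]
so $\Theta^3\equiv\Theta\pmod 4$. By \Cref{thetadissection}, $\Theta(q)=\Theta(q^4)+2q\psi(q^8)$, and the $\Theta(q^4)$ part only yields even exponents. Thus, modulo $4$,
\[
[q^{4n+3}]\,\Theta^3\psi(q^2)E(q^2)\equiv 2\,[q^{2n+1}]\,\psi(q^4)\psi(q)E(q).
\]
Now $\psi(q)E(q)=(q^2;q^2)_\infty^4/(q)_\infty^6\equiv (q^2;q^2)_\infty \pmod 2$, using $(q)_\infty^2\equiv(q^2;q^2)_\infty \pmod 2$. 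This is even in $q$, so its odd coefficients are even. Hence $[q^{4n+3}]G_1\equiv 0\pmod 8$, i.e.\ $c_\o(16n+12)\equiv0\pmod4$.

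The main obstacle is bookkeeping the exact powers of $2$ in the third case. The reduction $\Theta^3\equiv\Theta \pmod 4$ together with the mod-$2$ evenness of $\psi E$ is exactly where this is settled.
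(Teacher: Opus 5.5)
\textbf{Verdict.} Your proof is correct. The paper gives no argument for this lemma; it quotes it directly from [APSY, Theorem 3.4]. What you supply is therefore a self-contained derivation that uses only \Cref{odissection} and \Cref{Fdissection}, which the paper imports from the same sources.

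\textbf{Checks.} The extraction step is right. The exponents $8n+4$, $8n+6$, $16n+13$ avoid both $f(q^8)$ and $2q^3\o(-q^4)$, and they land in $G_0(q^4)$, $q^2G_2(q^4)$, $qG_1(q^4)$ respectively. The factorization $(q)_\infty^{-2}=\Theta(q)E(q^2)$ follows from \eqref{theta}. All three $2$-adic estimates go through; for example, $2c_\o(12)=88=[q^3]G_1(q)$.

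\textbf{A simplification.} You do not need the classical formulas for $r_2$ and $r_4$. Writing $\Theta=1+2T$, as you already do in the third case, gives $\Theta^2=1+4T(1+T)\equiv 1\pmod 4$ and $\Theta^4=(1+4T(1+T))^2\equiv 1\pmod 8$. This is exactly the divisibility of the odd coefficients you use in the first two cases. All three congruences then follow from one elementary mechanism: Watson's identity, the $4$-dissection of $G$, the parity-preserving factor $E(q^2)$, and congruences for $\Theta=1+2T$ modulo powers of $2$.

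\textbf{Comparison.} The paper gains brevity by citation. Your version makes explicit that the lemma needs nothing beyond the two dissection lemmas already stated in \Cref{sec1}.
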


\section{Proof of \Cref{lem1}}\label{sec2}
 
 In this section, we prove \Cref{lem1}.

\begin{proof}[Proof of \Cref{lem1}]
By \Cref{lemident1} with $q\mapsto q^2$, we have
\begin{align*}
\sum_{n\ge 0}\frac{\left(-aq^2,-bq^2;q^2\right)_nq^{2n+2}}{\left(-cq^2;q^2\right)_n}
&=\frac{c}{ab}\sum_{n\ge 1}\!\frac{\left(-c^{-1};q^2\right)_n \pa{ab}{c}^n q^{n(n+1)}}{\left(\frac{aq^2}{c},\frac{bq^2}{c};q^2\right)_n}\\
&\hspace{1 cm}-\frac{c\left(-aq^2,-bq^2;q^2\right)_{\infty}}{ab\left(-cq^2;q^2\right)_{\infty}}\sum_{n\ge 1}\!\frac{\pa{ab}{c^2}^nq^{2n^2}}{\left(\frac{aq^2}{c},\frac{bq^2}{c};q^2\right)_n}.
\end{align*}
Plugging $a=b=-q^{-1}$ and $c=1$ into the above, we obtain 
\begin{align*}
\sum_{n\ge 0}\frac{\left(q;q^2\right)^2_nq^{2n+2}}{\left(-q^2;q^2\right)_n}
&=2q^2\sum_{n\ge 1}\!\frac{\left(-q^2;q^2\right)_{n-1} q^{n(n-1)}}{\left(-q;q^2\right)^2_n}-\frac{\left(q;q^2\right)^2_{\infty}q^2}{\left(-q^2;q^2\right)_{\infty}}\sum_{n\ge 1}\!\frac{q^{2n(n-1)}}{\left(-q;q^2\right)^2_n}\\
&=2q^2\sum_{n\ge 0}\!\frac{\left(-q^2;q^2\right)_{n} q^{n(n+1)}}{\left(-q;q^2\right)^2_{n+1}}-\frac{\left(q;q^2\right)^2_{\infty}q^2}{\left(-q^2;q^2\right)_{\infty}}\sum_{n\ge 0}\!\frac{q^{2n(n+1)}}{\left(-q;q^2\right)^2_{n+1}}\\
&=2q^2B(-q)-\frac{\left(q;q^2\right)^2_{\infty}q^2}{\left(-q^2;q^2\right)_{\infty}}\o(-q),
\end{align*}
using \eqref{McIntosh} and \eqref{Ramanujanomega} with $q\mapsto -q$. Multiplying both sides by $q^{-2}$ and using \eqref{S}, the theorem follows.\qedhere 
\end{proof}


\section{Proof of \Cref{ABConj1}}\label{sec3}

 In this section, we prove \Cref{ABConj1}.

\begin{proof}[Proof of \Cref{ABConj1}]
Using \eqref{C}, \eqref{A}, and \Cref{lem1}, we have 
\begin{align}\label{eqn1}
C(q)
&=\frac{2q\left(-q^2;q^2\right)_{\infty}}{\left(q;q^2\right)^2_{\infty}}B(-q)-q\o(-q).
\end{align}
By \Cref{Ref1}, we have, for $n\in \mathbb{N}_0$, 
\begin{equation}\label{ref1}
\textnormal{coeff}_{\left[q^{8n+3}\right]}\left(\o(q)\right)\equiv 0\pmod{4}.
\end{equation}
Now, for a $q$-series $H(q)=\sum_{n\ge 0}h(n)q^n$ and $n\in \mathbb{N}_0$, we define
\[
\operatorname{coeff}_{\left[q^n\right]}\left(H(q)\right):=h(n).
\]
Note that \eqref{ref1} is equivalent to \[\textnormal{coeff}_{\left[q^{8n+4}\right]}\left(q\o(-q)\right)\equiv 0\pmod{4}.\]
 Thus, to prove \Cref{ABConj1}, by \eqref{A} and \eqref{eqn1}, we need to show that, for $n\in \mathbb{N}_0$,
\begin{align*}
 \textnormal{coeff}_{\left[q^{8n+3}\right]}\!\left(A(q)B(-q)\right)\!\equiv\! 0\!\pmod{2}.
\end{align*}

 Now, we reduce $A(q)B(-q)$ modulo $2$. First, we obtain, using (1.2), 
 \begin{align*}
 A(q)=\frac{\left(-q^2;q^2\right)_{\infty}}{\left(q;q^2\right)^2_{\infty}}
 &\!\equiv\!\left(q^2;q^2\right)^2_{\infty}\!\equiv\! \left(q^4;q^4\right)_{\infty}\!\pmod{2}.
 \end{align*}
 The final $q$-series only has exponents divisible by $4$.

 Next, we consider $B(-q)$. By \Cref{And}, we have 
 \begin{equation*}
 B(-q)\equiv B(q)\equiv\sum_{n\in \mathbb{Z}}\frac{(-1)^n q^{2n(n+1)}}{1-q^{2n+1}}\pmod{2},
 \end{equation*}
using the fact that $\frac{(-q^2;q^2)_{\infty}}{(q^2;q^2)_{\infty}}\equiv 1\pmod{2}$. We then rewrite 
\begin{align*}
\sum_{n\in \mathbb{Z}}\frac{(-1)^n q^{2n(n+1)}}{1-q^{2n+1}}&\equiv \sum_{n\in \mathbb{Z}}\frac{ q^{2n(n+1)}}{1+q^{2n+1}}=\frac 12\sum_{n\in \mathbb{Z}}\left(\frac{q^{2n(n+1)}}{1+q^{2n+1}}+\frac{q^{2n(n+1)}}{1+q^{-2n-1}}\right)\nonumber\\
&=\frac 12\sum_{n\in \mathbb{Z}}q^{2n(n+1)}\pmod{2}.
\end{align*}
Again, note that the final $q$-series only has exponents divisible by $4$. Combining gives the claim. 
This concludes the proof of \Cref{ABConj1}.\qedhere 
\end{proof}


\section{Proof of \Cref{ABConj1a}}\label{sec3a}

In this section, we prove \Cref{ABConj1a}. 

\begin{proof}[Proof of \Cref{ABConj1a}]
We split the proof into two parts corresponding to the two terms in \Cref{lem1} (after multiplying the pre-factor, see \eqref{C}). The contribution of the mock theta function is handled using known congruences, while the remaining term is treated via a $4$-dissection and cancellation argument.

 By \Cref{Ref1}, we have, for $n\in \mathbb{N}_0$, 
\[\textnormal{coeff}_{\bigl[q^{8n+6}\bigr]}\left(q\o(-q)\right)\equiv 0\pmod{8}.\]
Thus, to prove \Cref{ABConj1a}, by \eqref{eqn1}, \eqref{A}, and \Cref{Ref1}, we need to show that, for $n\in \mathbb{N}_0$, 
\begin{align*}
 \textnormal{coeff}_{\bigl[q^{8n+5}\bigr]}\!\left(A(q)B(-q)\right)\!\equiv\! 0\!\pmod{4}.
\end{align*}
In the following we even prove the stronger result that 
\begin{equation}\label{TSalt}
\textnormal{coeff}_{\left[q^{4n+1}\right]}\left(A(q)B(-q)\right)\equiv 0\pmod{4}.
\end{equation}

We first show that\footnote{Andrews and Bachraoui \cite[Remark 3]{AB} noticed that \eqref{claim1} holds. However, we give a proof for the reader's convenience.} 
 \begin{equation}\label{claim1}
\textnormal{coeff}_{\left[q^{4n+j}\right]}\left(A(q)\right)\equiv 0\pmod{4}\ \ \text{for}\ \ j\in\{2,3\}.
 \end{equation}
For this definitions \eqref{A} and \eqref{thetas}, we may write 
\begin{equation}\label{claim1eqn1}
A(q)
=\frac{\left(q^4;q^4\right)_{\infty}}{\Theta(-q)}.
\end{equation}
Now, by \eqref{thetas}, note that 
\[
\Theta(-q)\equiv 1+2T(q)\pmod{4},
\]
where $T(q):=\sum_{n\ge 1} q^{n^2}$. Plugging this into \eqref{claim1eqn1}, we obtain 
\begin{align}\label{claim1eqn2}
\hspace{-0.33 cm}A(q)&\equiv\!\frac{\left(q^4;q^4\right)_{\infty}}{1+2T(q)}\!\equiv\! \left(q^4;q^4\right)_{\infty}\!\left(1-2T(q)\right)\!\equiv\! \left(q^4;q^4\right)_{\infty}\!\left(1+2T(q)\right)\!\!\pmod{4}.
\end{align}
Next, we write 
\[
T(q)=
\sum_{n\ge 1}q^{4n^2}+q\sum_{n\ge 0}q^{4n(n+1)}.
\]
Plugging this into \eqref{claim1eqn2}, we obtain
\begin{equation}\label{claim1eqn3}
A(q)\equiv \left(q^4;q^4\right)_{\infty}\sum_{n\in \mathbb{Z}}q^{4n^2}+2q\left(q^4;q^4\right)_{\infty}\sum_{n\ge 0}q^{4n(n+1)}\pmod{4}.
\end{equation}
This proves \eqref{claim1}. 

Next, by \eqref{claim1eqn3}, we have 
\begin{equation*}
A(q)\equiv A_0\!\left(q^4\right)+qA_1\!\left(q^4\right)\pmod{4},
\end{equation*}
where 
\begin{align}\label{claim2eqn1}
A_0(q)&:=\left(q\right)_{\infty}\sum_{n\in \mathbb{Z}}q^{n^2}=\frac{\left(q^2;q^2\right)^5_{\infty}}{\left(q\right)_{\infty}\left(q^{4};q^{4}\right)^2_{\infty}},\nonumber\\
A_1(q)&:=2\left(q\right)_{\infty}\sum_{n\ge 0}q^{n(n+1)}=\frac{2\left(q\right)_{\infty}\left(q^{4};q^{4}\right)^2_{\infty}}{\left(q^2;q^2\right)_{\infty}},
\end{align}
using \eqref{theta} and \eqref{psi}. 

Now write
\[
B(q)=\sum_{j=0}^{3}q^j\!B_j\!\left(q^4\right).
\]
Then 
\[
A(q)B(-q)\equiv \left(A_0\!\left(q^4\right)+qA_1\!\left(q^4\right)\right)\sum_{j=0}^{3}(-1)^jq^j\!B_j\!\left(q^4\right)\!\pmod{4}=:\sum_{j=0}^3q^j\!\mathcal{F}_j\!\left(q^4\right),
\]
for certain $q$-series $\mathcal{F}_j$. In particular, 
\[
\mathcal{F}_1(q)=-A_0\!\left(q\right)B_1\!\left(q\right)+A_1\!\left(q\right)B_0\!\left(q\right).
\]
To finish the proof of \eqref{TSalt}, we need to show that
\begin{equation}\label{F1}
\mathcal{F}_1(q)\equiv 0\pmod{4}.
\end{equation}
We now determine $B_0$ and $B_1$. For this, using \Cref{Mao} and \Cref{CM}, we note 
\begin{align}\label{claim2eqn2}
B_0(q)&=\sum_{n\ge 0}c_B(4n)q^n=\frac{\left(q^{2};q^2\right)^{14}_{\infty}}{\left(q\right)^9_{\infty} \left(q^{4};q^{4}\right)^4_{\infty}},\nonumber\\
B_1(q)&=\sum_{n\ge 0}c_B(4n+1)q^n=-\frac{2\left(q^2;q^2\right)^8_{\infty}}{\left(q\right)^7_{\infty}}.
\end{align}
 Combining \eqref{claim2eqn1} and \eqref{claim2eqn2}, we obtain 
 \begin{align*}
 A_0(q)B_1(q)
 =-\frac{2\left(q^{2};q^2\right)^{13}_{\infty}}{\left(q\right)^8_{\infty} \left(q^{4};q^{4}\right)^2_{\infty}}.
 \end{align*}
Similarly, combining \eqref{claim2eqn1} and \eqref{claim2eqn2}, we obtain 
\begin{align*}
A_1(q)B_0(q)
=\frac{2\left(q^{2};q^2\right)^{13}_{\infty}}{\left(q\right)^8_{\infty} \left(q^{4};q^{4}\right)^2_{\infty}}.\\
\end{align*}
Thus \eqref{F1} holds.\qedhere 
\end{proof}


\section{Proof of \Cref{newthm}}\label{sec3b}

In this section, we prove \Cref{newthm}. First, we show the following.

\begin{proposition}\label{newprop}
	We have
	\[
	A(q)B(-q)\equiv (q^{16};q^{16})_\infty \pmod 2.
	\]
\end{proposition}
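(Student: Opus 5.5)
The plan is to reduce both factors modulo $2$ separately, reusing the computations from the proof of \Cref{ABConj1}, and then identify the product as an eta-quotient.

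\textbf{Step 1: reduce $A(q)$.} Exactly as in the proof of \Cref{ABConj1}, we have $A(q)\equiv (q^2;q^2)_\infty^2\equiv (q^4;q^4)_\infty \pmod 2$. This uses $(-q^2;q^2)_\infty\equiv (q^2;q^2)_\infty$ and $(q;q^2)_\infty^{-2}\equiv (q^2;q^4)_\infty^{-1}$ modulo $2$, together with $(q^2;q^2)_\infty=(q^2;q^4)_\infty(q^4;q^4)_\infty$.

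\textbf{Step 2: reduce $B(-q)$.} Since $B$ has integer coefficients, $B(-q)\equiv B(q)\pmod 2$. By \Cref{And} and $\frac{(-q^2;q^2)_\infty}{(q^2;q^2)_\infty}\equiv 1\pmod 2$, it follows that $B(q)\equiv \sum_{n\in\mathbb{Z}} \frac{q^{2n(n+1)}}{1+q^{2n+1}}\pmod 2$. Here the signs $(-1)^n$ are dropped and $1-q^{2n+1}$ is replaced by $1+q^{2n+1}$; both are harmless modulo $2$ at the level of power series. For $n<0$ each term is expanded as $q^{2n(n+1)}\cdot\frac{q^{-2n-1}}{1+q^{-2n-1}}$.

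To avoid the factor $\frac12$ that appears in the earlier proof, I will evaluate this sum exactly rather than only modulo $2$. The map $n\mapsto -n-1$ preserves the exponent $2n(n+1)$ and sends $2n+1$ to $-(2n+1)$. Pairing each $n\ge 0$ with $-n-1$ and using
\[
\frac{1}{1+x}+\frac{1}{1+x^{-1}}=1,
\]
the sum equals $\sum_{n\ge 0} q^{2n(n+1)}=\psi(q^4)$, by \eqref{psi}. Hence $B(-q)\equiv \psi(q^4)\pmod 2$.

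\textbf{Step 3: combine.} Using the product form of $\psi$ in \eqref{psi}, we get
\[
A(q)B(-q)\equiv (q^4;q^4)_\infty\,\frac{(q^8;q^8)_\infty^2}{(q^4;q^4)_\infty}=(q^8;q^8)_\infty^2\equiv (q^{16};q^{16})_\infty \pmod 2,
\]
where the last step is the Frobenius-type congruence $(x;x)_\infty^2\equiv (x^2;x^2)_\infty\pmod 2$ applied with $x=q^8$.

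The only delicate point is Step 2. The earlier argument divides by $2$ inside a congruence modulo $2$, which is not legitimate as written. The exact pairing identity above sidesteps this, since it yields the integral series $\psi(q^4)$ directly. Everything else is a routine manipulation of products modulo $2$.
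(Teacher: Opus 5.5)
Your proposal is correct and is essentially the paper's own proof. Both reduce $A(q)$ to $(q^4;q^4)_\infty$, reduce $B(-q)$ to $\psi(q^4)$ via \Cref{And} and the exact pairing $n\leftrightarrow -n-1$, and finish with the product form of $\psi$ and $(x;x)_\infty^2\equiv (x^2;x^2)_\infty \pmod 2$.

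One correction to your aside: the proof of \Cref{ABConj1} is not illegitimate. There the factor $\frac12$ enters only through the exact symmetrization $\sum_{n\in\mathbb{Z}}F(n)=\frac12\sum_{n\in\mathbb{Z}}\bigl(F(n)+F(-n-1)\bigr)$, and $\frac12\sum_{n\in\mathbb{Z}}q^{2n(n+1)}=\psi(q^4)$ is integral, so no division by $2$ happens inside a congruence.
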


\begin{proof}
	First, by \eqref{A}, we have
	\[
	A(q)=\frac{(-q^2;q^2)_\infty}{(q;q^2)_\infty^2}
	\equiv \frac{(q^2;q^2)_\infty}{(q^2;q^4)_\infty}
	=(q^4;q^4)_\infty \pmod 2.
	\]
	Next, \Cref{And} gives
	\[
	B(-q)=\frac{(-q^2;q^2)_\infty}{(q^2;q^2)_\infty}
	\sum_{n\in \mathbb{Z}}\frac{(-1)^n q^{2n(n+1)}}{1+q^{2n+1}}
	\equiv
	\sum_{n\in \mathbb{Z}}\frac{q^{2n(n+1)}}{1+q^{2n+1}} \pmod 2.
	\]
	Pairing the terms for $n$ and $-n-1$, we obtain
	\[
	\frac{q^{2n(n+1)}}{1+q^{2n+1}}
	+
	\frac{q^{2n(n+1)}}{1+q^{-2n-1}}
	= q^{2n(n+1)}.
	\]
	Hence
	\[
	B(-q)\equiv \sum_{n\ge 0} q^{2n(n+1)} = \psi(q^4) \pmod 2.
	\]
	Using \eqref{psi}, we conclude that
	\[
	A(q)B(-q)
	\equiv \left(q^4;q^4\right)_\infty\,\psi\!\left(q^4\right)
	= \left(q^8;q^8\right)_\infty^2
	\equiv \left(q^{16};q^{16}\right)_\infty \pmod 2.\qedhere
	\]
\end{proof}

Now we are ready to prove \Cref{newthm}.

\begin{proof}[Proof of \Cref{newthm}]
By \Cref{newprop}, we have
\begin{equation}\label{neqn1}
2qA(q)B(-q)\equiv 2q\left(q^{16};q^{16}\right)_\infty \pmod 4.
\end{equation}
 Since $\textnormal{coeff}_{[q^{16n+j}]}(q(q^{16};q^{16})_\infty)=0$ for $j\neq 1$, by \eqref{neqn1},
$\textnormal{coeff}_{[q^{16n+13}]}(2qA(q)$\ $B(-q))\equiv 0\pmod{4}$. On the other hand, 
$\textnormal{coeff}_{[q^{16n+13}]}(q\omega(-q))=c_{\omega}(16n+12)$, because $(-1)^{16n+12}=1$.
By \Cref{Ref1}, we obtain
\[
c_{\omega}(16n+12)\equiv 0 \pmod 4.
\]
Therefore, using \eqref{eqn1}, we conclude the claim.\qedhere
\end{proof}

\section{Proof of \Cref{ABConj2}}\label{sec4}

In this section, we prove \Cref{ABConj2}. Write 
\[
\Theta(q)\o(q)=:\sum_{n\ge 0}\d(n)q^n.
\]
 For a $q$-series $H(q):=\sum_{n\ge 0}h(n) q^n$, we define the operator 
\[
\mathcal{R}\left(H(q)\right):=\sum_{n\ge 0}h\!\left(4n+1\right) q^n.
\] and set 
\[
M(q):=\mathcal{R}\left(\Theta(q)\o(q)\right).
\]

In the following proposition we show that $M$ is a modular form and determine its explicit shape.

\begin{proposition}\label{eq:main-id}
	We have 
	\begin{equation*}
	M(q)
	=
	\frac{4\left(q^2;q^2\right)^2_\infty}{\left(q;q^2\right)^6_\infty}.
	\end{equation*}
\end{proposition}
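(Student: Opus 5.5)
Since $\Theta(q)$ is not a function of $q^4$, we cannot simply pull it out of $\mathcal{R}$. Instead we use \Cref{odissection} to express $\o(q)$ through the modular $G(q)$ plus pieces living on arithmetic progressions. Multiply \Cref{odissection} by $\Theta(q)$ and rewrite it as
\[
2q\Theta(q)\o(q)=\Theta(q)G(q)-\Theta(q)f\!\left(q^8\right)-2q^3\Theta(q)\o\!\left(-q^4\right).
\]
We extract the terms with exponents $\equiv 2\pmod 4$ on both sides. These correspond to exponents $4n+1$ of $\Theta(q)\o(q)$, so the left-hand side gives $2q^2M(q^4)$. By \Cref{thetadissection}, $\Theta(q)=\Theta(q^4)+2q\psi(q^8)$ has only exponents $\equiv 0,1\pmod 4$.

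\emph{The $f$-term.} $\Theta(q)f(q^8)$ has exponents only in the classes $0,1\pmod 4$, so it contributes nothing.

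\emph{The $\o(-q^4)$-term.} $q^3\Theta(q)\o(-q^4)$ has exponents only in the classes $3,0\pmod 4$, so it also contributes nothing.

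Thus $2q^2M(q^4)$ equals the part of $\Theta(q)G(q)$ with exponents $\equiv 2\pmod 4$. By \Cref{Fdissection}, this part is
\[
q^2\left(\Theta\!\left(q^4\right)G_2\!\left(q^4\right)+2\psi\!\left(q^8\right)G_1\!\left(q^4\right)\right),
\]
after noting that the term $2q\psi(q^8)\cdot qG_1(q^4)$ lands in class $2$. (The statement of \Cref{Fdissection} lists $G_0,\dots,G_3$, and we only need these.) Replacing $q^4\mapsto q$ gives
\[
2M(q)=\Theta(q)G_2(q)+2\psi\!\left(q^2\right)G_1(q)=\frac{4\Theta^2(q)\psi^2\!\left(q^2\right)+4\Theta^2(q)\psi^2\!\left(q^2\right)}{(q)_\infty^2},
\]
hence
\[
M(q)=\frac{4\Theta^2(q)\psi^2\!\left(q^2\right)}{(q)_\infty^2}.
\]

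The last step is the eta-quotient simplification. Using \eqref{theta} and \eqref{psi}, we have $\Theta^2(q)=\frac{(q^2;q^2)^{10}_\infty}{(q)^4_\infty(q^4;q^4)^4_\infty}$ and $\psi^2(q^2)=\frac{(q^4;q^4)^4_\infty}{(q^2;q^2)^2_\infty}$. Therefore
\[
M(q)=\frac{4\left(q^2;q^2\right)^8_\infty}{(q)^6_\infty}.
\]
Since $(q;q^2)_\infty=\frac{(q)_\infty}{(q^2;q^2)_\infty}$, we have $(q;q^2)_\infty^{-6}=\frac{(q^2;q^2)^6_\infty}{(q)^6_\infty}$, so this equals $\frac{4(q^2;q^2)^2_\infty}{(q;q^2)^6_\infty}$, as claimed.

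The main obstacle is the residue bookkeeping. We must check carefully that neither the $f(q^8)$-term nor the $\o(-q^4)$-term leaks into the class $2\pmod 4$. We must also check that exactly the two cross terms survive, with the factor $2$ from $2q\psi(q^8)$ tracked correctly.
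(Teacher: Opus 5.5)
Your proof is correct and is essentially the paper's argument. The paper first reads off $\omega_0=G_1/2$ and $\omega_1=G_2/2$ from Watson's identity, then uses $\Theta(q)=\Theta(q^4)+2q\psi(q^8)$ to get $M=\Theta\omega_1+2\psi(q^2)\omega_0$; you instead multiply Watson's identity by $\Theta(q)$ first and then extract the class $2 \pmod 4$, which gives the identical expression $\Theta G_2/2+\psi(q^2)G_1$ followed by the same eta-quotient simplification (your ordering has the minor advantage of never introducing the cancelling $q^{-1}$ terms that appear in the paper's rewriting of $\omega(q)$).
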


\begin{proof}
 

We write $\o(q)$ as 
	\[
	\o(q)=\sum_{j=0}^{3}q^j\o_{j}\!\left(q^4\right). 
	\]
\Cref{thetadissection} gives 
\begin{equation}\label{ref2}
M(q)=\Theta(q)\o_1(q)+2\psi\!\left(q^2\right)\o_0(q).
\end{equation}
 By \Cref{odissection}, we have
\[
\o(q)=\frac 12q^{-1}G(q)-\frac 12q^{-1}f\!\left(q^8\right)-\frac 12 q^2\o\!\left(-q^4\right).
\]
Note that only the first term contributes to $\o_0$ and $\o_1$. By \Cref{Fdissection}, we obtain
\[
\frac 12q^{-1}G(q)=\frac 12\sum_{j=0}^3q^{j-1}G_j\!\left(q^4\right).
\]
Then, Lemma 2.7 yields
\begin{align}\label{eqn2}
\o_0(q)&=\frac{G_1(q)}2=\frac{\Theta^2(q)\psi\!\left(q^2\right)}{(q)^2_{\infty}},\quad  \o_1(q)=\frac{G_2(q)}2=\frac{2\Theta(q)\psi^2\!\left(q^2\right)}{(q)^2_{\infty}}.
\end{align}

By \eqref{ref2}, we then note that
\begin{align*}
&M\!\left(q\right)=
\frac{4\Theta^2\!\left(q\right)\psi^2\!\left(q^2\right)}{\left(q\right)^2_\infty}
=\frac{4\left(q^2;q^2\right)^{2}_\infty}{\left(q;q^2\right)^6_\infty},
\end{align*}
using \eqref{eqn2}, \eqref{psi}, and \eqref{theta}. This gives the claim.\qedhere 
\end{proof}

We are now ready to show \Cref{ABConj2}.

\begin{proof}[Proof of \Cref{ABConj2}]
Set $D(q):=\frac{(q;q^2)^2_{\infty}}{(-q^2;q^2)_{\infty}}\o(-q)$. Now, by \Cref{lem1},
\begin{equation}\label{B1}
\mathcal{R}\left(S(q)\right)=\mathcal{R}\left(2B(-q)\right)-\mathcal{R}\left(D(q)\right).
\end{equation}
By \Cref{CM}, we have 
\begin{equation}\label{B2}
\mathcal{R}\left(2B(-q)\right)=-2\sum_{n\ge 0}c_B(4n+1)q^n=-\frac{4\left(q^2;q^2\right)^8_{\infty}}{\left(q\right)^7_{\infty}}=-\frac{4\left(q^2;q^2\right)_{\infty}}{\left(q;q^2\right)^7_{\infty}}.
\end{equation}
Next, by \eqref{thetas}, we obtain 
\[
D(q)=\frac{\Theta(-q)\o(-q)}{\left(q^4;q^4\right)_{\infty}}.
\]
 Since, $\Theta(-q)\o(-q)=\sum_{n\ge 0}(-1)^n\d(n)q^n$, we have
\[
\mathcal{R}\left(\Theta(-q)\o(-q)\right)=-M(q).
\]
 As $(q^4;q^4)^{-1}$ contains only powers of $q^{4m}$ ($m\in \mathbb{N}_{ 0}$), we obtain
\[
\mathcal{R}\left(D(q)\right)=-\frac{M(q)}{\left(q\right)_{\infty}}.
\]
By \Cref{eq:main-id}, we have
\[
M(q)=\frac{4\left(q^2;q^2\right)^2_\infty}{\left(q;q^2\right)^6_\infty},
\]
so
\begin{equation}\label{B3}
\mathcal{R}\left(D(q)\right)=-\frac{4\left(q^2;q^2\right)^2_\infty}{\left(q\right)_{\infty}\left(q;q^2\right)^6_\infty}=-\frac{4\left(q^2;q^2\right)_{\infty}}{\left(q;q^2\right)^7_{\infty}}.
\end{equation}
The right-hand sides of \eqref{B2} and \eqref{B3} are equal, hence
\[
\mathcal{R}\left(2B(-q)\right)=\mathcal{R}\left(D(q)\right).
\]
Thus, by \eqref{B1}, we conclude the proof.\qedhere

\end{proof}

\section{Open questions}\label{sec5}

  We conclude this paper by proposing following conjectures related to congruences of $c(n)$ for future research.

\begin{conjecture}\label{conj1}
	For every $n\in \mathbb{N}_0$, we have\footnote{We have checked this numerically for $32n+23\le 5000$.}
	\[
	c(32n+23)\equiv 0 \pmod 8.
	\]
\end{conjecture}

 More generally, we propose the following infinite family of congruences for $c(n)$ modulo $4$ and $8$. 

\begin{conjecture}\label{conj2}
For $k,n\in \mathbb{N}_0$, we have
\begin{align*}
c\left(2^{2k+3}n+\frac{11\cdot 4^k+1}{3}\right)\equiv 0\pmod{4},\\
c\left(2^{2k+3}n+\frac{17\cdot 4^k+1}{3}\right)\equiv 0\pmod{8},\\
c\left(2^{2k+4}n+\frac{38\cdot 4^k+1}{3}\right)\equiv 0\pmod{4}.
\end{align*}	
\end{conjecture}

\begin{remark}
{\it Note that we settle the case $k=0$ of \Cref{conj2} in Theorems \ref{ABConj1}--\ref{newthm}. The case $k=1$ (see the congruence modulo $8$) is \Cref{conj1}.}
\end{remark}


\end{document}